\documentclass[1pt]{article}
%%%%%%%%%%%%%%%%%%%%%%%%%%%%%%%%%%%%%%%%%%%%%%%%%%%%%%%%%%%%%%%%%%%%%%%%%%%%%%%%%%%%%%%%%%%%%%%%%%%%%%%%%%%%%%%%%%%%%%%%%%%%%%%%%%%%%%%%%%%%%%%%%%%%%%%%%%%%%%%%%%%%%%%%%%%%%%%%%%%%%%%%%%%%%%%%%%%%%%%%%%%%%%%%%%%%%%%%%%%%%%%%%%%%%%%%%%%%%%%%%%%%%%%%%%%%
\usepackage{amsmath, amssymb, amsthm, amsfonts, cases}
\usepackage{mathrsfs}
\usepackage{url}
\usepackage{authblk}
\usepackage[usenames]{color}
\usepackage{geometry}

\newtheorem{rmk}{Remark}[section]
\newtheorem{lemma}{Lemma}[section]
\newtheorem{theorem}{Theorem}[section]

\newtheorem{definition}{Definition}[section]
\newtheorem{pro}{Problem}[section]
\newtheorem{ass}{Assumption}[section]

\renewcommand{\d}{\mathrm{d}}
\newcommand{\eps}{\varepsilon}

\newcommand{\la}{\langle}
\newcommand{\ra}{\rangle}

\geometry{left=1cm,right=1cm,top=1.5cm,bottom=1.5cm}

\allowdisplaybreaks

 \makeatother
\begin{document}

\title{  A Revisit to Optimal Control of Forward-Backward Stochastic Differential System with Observation Noise
 \thanks{This work was supported by the Natural Science Foundation of Zhejiang Province
for Distinguished Young Scholar  (No.LR15A010001),  and the National Natural
Science Foundation of China (No.11471079, 11301177) }}

\date{}

   \author{ Qingxin Meng\thanks{Corresponding author.   E-mail: mqx@zjhu.edu.cn}  \hspace{1cm}
   Qiuhong Shi
 \hspace{1cm}  Maoning Tang
\hspace{1cm}
\\\small{Department of Mathematics, Huzhou University, Zhejiang 313000, China}}

\maketitle
\begin{abstract}

This paper revisits the partial information optimal control problem  considered by Wang, Wu and
Xiong \cite{WWX},
where the  system is
 derived by a controlled forward-backward stochastic differential equation with correlated noises between the system and the observation.
 For this type of partial information optimal control problem, one necessary and one suffcient (a verification theorem) conditions of optimality
are derived using  a unified way.
 We improve
the $L^p-$ bounds on the control  from $L^8$ in \cite{WWX}
to $L^4$ in this paper.
\end{abstract}

\textbf{Keywords} Maximum Principle, Forward-Backward
Stochastic Differential Equation, Partial Information,  Girsanov¡¯s Theorem

\maketitle

\section{ Introduction}
It is well known that  forward-backward stochastic differential
equations (FBSDEs in short) consists of a forward stochastic
differential equation (SDE in short)  of It\^{o} type and a backward
stochastic differential equation (BSDE in short) of Pardoux-Peng
(for details see \cite{PaPe90},\cite{ElPe}).

  FBSDEs are not only encountered in  stochastic optimal control problems
when applying the stochastic maximum principle but also used in
mathematical finance (see Antonelli \cite{Anto}, Duffie and Epstein
\cite{DuEp}, El Karoui, Peng and Quenez \cite{ElPe} for example). It
now becomes more clear that certain important problems in
mathematical economics and mathematical finance, especially in the
optimization problem, can be formulated to be FBSDEs.

 There are two important approaches to the general stochastic optimal
control problem. One is the Bellman dynamic programming principle,
which results in the Hamilton-Jacobi-Bellman equation.  The other
 is the maximum principle.
 Now the maximum principle of forward-backward stochastic systems
 driven by  Brownian motion have been studied extensively in the
 literature. We refer to \cite{ShWz2006, Wu9,Xu95,Yong10}and references therein.

In recent years, there have been growing interests on stochastic
optimal control problems under partial information, partly due to
the applications in mathematical finance. For the partial
information  optimal control problem, the objective is to find an
optimal control for which the controller has less information than
the complete information filtration. In particular, sometimes an
economic model in which there are information gaps among economic
agents can be formulated as a partial information optimal control
problem (see ${\O}$ksendal \cite{okb}, Kohlmann and Xiong \cite{KoX}).

  Recently, Baghery and
${\O}$ksendal \cite{BOK} established a maximum principle of forward systems
with jumps  under partial information.
In 2009,  Meng \cite{Meng} studied
 a partial information stochastic optimal control problem of continuous fully
coupled forward-backward stochastic systems driven by a Brownian
motion. As in \cite{BOK}, the author established  one sufficient (a
verification theorem) and one necessary conditions of optimality.
The main limitation of \cite{BOK} and
 \cite{Meng}
is that the stochastic systems
do not contain observation noise
and the partial information filtration
is too general to have more
practical application.
In 2013, Wang, Wu and Xiong \cite{WWX} studied a partial information optimal control problem derived by forward-backward stochastic systems with correlated noises between the system and the observation. Utilizing a direct method, an approximation method, and a Malliavin derivative method, they established three versions of maximum principle (i.e., necessary condition) for optimal control,
where the following  $L^8-$ bounds
is imposed on the admissible controls:
\begin{eqnarray}\label{eq:111}
  \mathbb E\bigg[\sup_{0\leq t\leq T}|u(t)|^8\bigg].
\end{eqnarray}

The present paper revisits the partial information optimal control problem  considered by Wang, Wu and Xiong \cite{WWX}. Its one of the two main
contributions is that  we  improve
the $L^p-$ bounds on the control  from $L^8-$ bounds \eqref{eq:111}
to the following  $L^4-$ bounds
\begin{eqnarray}
\mathbb E\bigg[\bigg(\int_0^T|u(t)|^2
dt\bigg)^{2}\bigg]<\infty.
\end{eqnarray}
 Another contribution of this paper is
that
we will establish  necessary and
sufficient conditions for an optimal control in a unified way. The
main idea is to get directly a variation formula in terms of the
Hamiltonian and the associated adjoint system which is a linear
forward-backward
stochastic differential equation  and neither the
variational systems nor the corresponding Taylor type expansions of
the state process and the cost functional will be considered.

The paper is organized as follows. In section 2, we formulate the problem and give
various assumptions used throughout the paper. Section 3 is devoted to derive necessary as well as sufficient optimality
conditions in the form of stochastic maximum principles in a unified
way.

\section{Formulation of Problem}

In this section, we introduce
some basic notations  which will be
used in this paper.
Let ${\cal T} : = [0, T]$ denote a finite time index, where $0<T <
\infty$. We consider a complete probability space $( \Omega,
{\mathscr F}, {\mathbb P} )$  equipped with two one-dimensional
standard Brownian motions $\{W(t), t \in {\cal T}\}$ and $\{Y(t),t \in {\cal T}\},$
respectively. Let $%
\{\mathscr{F}^W_t\}_{t\in {\cal T}}$ and $%
\{\mathscr{F}^Y_t\}_{t\in {\cal T}}$ be $\mathbb P$-completed natural
filtration generated by $\{W(t), t\in {\cal T}\}$ and $\{Y(t), t\in {\cal T}\},$ respectively. Set $\{\mathscr{F}_t\}_{t\in {\cal T}}:=\{\mathscr{F}^W_t\}_{t\in {\cal T}}\bigvee
\{\mathscr{F}^Y_t\}_{t\in {\cal T}}, \mathscr F=\mathscr F_T.$   Denote by $\mathbb E[\cdot]$ the expectation
under the probablity $\mathbb P.$
 Let $E$ be a Euclidean space. The inner product in $E$ is denoted by
$\langle\cdot, \cdot\rangle,$ and the norm in $ E$ is denoted by $|\cdot|.$
Let $A^{\top }$ denote the
transpose of the matrix or vector $A.$
For a
function $\psi:\mathbb R^n\longrightarrow \mathbb R,$ denote by
$\psi_x$ its gradient. If $\psi: \mathbb R^n\longrightarrow \mathbb R^k$ (with
$k\geq 2),$ then $\psi_x=(\frac{\partial \phi_i}{\partial x_j})$ is
the corresponding $k\times n$-Jacobian matrix. By $\mathscr{P}$ we
denote the
predictable $\sigma$ field on $\Omega\times [0, T]$ and by $\mathscr %
B(\Lambda)$ the Borel $\sigma$-algebra of any topological space
$\Lambda.$ In the follows, $K$ represents a generic constant, which
can be different from line to line.
Next we introduce some spaces of random variable and stochastic
 processes.
For any $\alpha, \beta\in [1,\infty),$
denote by
$M_{\mathscr{F}}^\beta(0,T;E)$ the space of all $E$-valued and ${%
\mathscr{F}}_t$-adapted processes $f=\{f(t,\omega),\ (t,\omega)\in \cal T
\times\Omega\}$ satisfying
$
\|f\|_{M_{\mathscr{F}}^\beta(0,T;E)}\triangleq{\left (\mathbb E\bigg[\displaystyle%
\int_0^T|f(t)|^ \beta dt\bigg]\right)^{\frac{1}{\beta}}}<\infty, $ by $S_{\mathscr{F}}^\beta (0,T;E)$ the space of all $E$-valued and ${%
\mathscr{F}}_t$-adapted c\`{a}dl\`{a}g processes $f=\{f(t,\omega),\
(t,\omega)\in {\cal T}\times\Omega\}$ satisfying $
\|f\|_{S_{\mathscr{F}}^\beta(0,T;E)}\triangleq{\left (\mathbb E\bigg[\displaystyle\sup_{t\in {\cal T}}|f(t)|^\beta \bigg]\right)^{\frac
{1}{\beta}}}<+\infty,$
by $L^\beta (\Omega,{\mathscr{F}},P;E)$ the space of all
$E$-valued random variables $\xi$ on $(\Omega,{\mathscr{F}},P)$
satisfying $ \|\xi\|_{L^\beta(\Omega,{\mathscr{F}},P;E)}\triangleq
\sqrt{\mathbb E|\xi|^\beta}<\infty,$
by $M_{\mathscr{F}}^\beta(0,T;L^\alpha (0,T; E))$ the space of all $L^\alpha (0,T; E)$-valued and ${%
\mathscr{F}}_t$-adapted processes $f=\{f(t,\omega),\ (t,\omega)\in[0,T]%
\times\Omega\}$ satisfying $
\|f\|_{\alpha,\beta}\triangleq{\left\{\mathbb E\bigg[\left(\displaystyle
\int_0^T|f(t)|^\alpha
dt\right)^{\frac{\beta}{\alpha}}\bigg]\right\}^{\frac{1}{\beta}}}<\infty. $

Consider the following
forward-backward stochastic differential equation

\begin{eqnarray} \label{eq:1}
\left\{
\begin{aligned}
dx(t)=&b(t,x(t),u(t))dt+ \sigma_1(t, x(t), u(t)) dW(t)+\sigma _2(t, x(t), u(t)) dW^u(t),
\\
dy(t)=&f(t,x(t),y(t),z_1(t),z_2(t),u(t))dt+ z_1(t) dW(t)+  z_2(t) dW^u(t),\\
x(0)=&x,\\
y(T)=&\phi(x(T))
\end{aligned}
\right.
\end{eqnarray}
with one observation  processes $Y(\cdot)$ driven by the following stochastic differential equation

\begin{eqnarray}\label{eq:2}
\left\{
\begin{aligned}
dY(t)=&h(t,x(t),u(t))dt+ dW^u(t),\\
Y(t)=& 0,
\end{aligned}
\right.
\end{eqnarray}
where $b: {\cal T} \times \Omega \times {\mathbb R}^n
 \times U  \rightarrow {\mathbb R}^n$,
  $\sigma_1:
{\cal T} \times \Omega \times {\mathbb R}^n
 \times U \rightarrow {\mathbb R}^n$, $\sigma_2: {\cal T} \times \Omega \times {\mathbb R}^n
 \times U \rightarrow {\mathbb R}^n $,
 $f: {\cal T} \times \Omega \times {\mathbb R}^n \times {\mathbb R}^m\times {\mathbb R}^m \times {\mathbb R}^m
 \times U \rightarrow {\mathbb R}^m,
 \phi: \Omega \times {\mathbb R}^n  \rightarrow {\mathbb R^m}$
 and $h: {\cal T} \times \Omega \times {\mathbb R}^n
 \times U \rightarrow {\mathbb R}$  are given random mapping with
 $U$ being a nonempty convex subset of $\mathbb R^k.$ In the above equations, $u(\cdot)$ is our admissible control
 process defined as follows.

\begin{definition}
   An admissible control process is defined as an ${\mathscr F}^Y_t$-adapted process valued in an nonempty
  convex subset $U$  in  $\mathbb R^K$ such that $$\mathbb E\bigg[\bigg(\int_0^T|u(t)|^2
dt\bigg)^{2}\bigg]<\infty.$$
  The set of all admissible controls is denoted by $\cal A.$
\end{definition}

Now we make the following standard  assumptions
 on the coefficients of the equations
 \eqref{eq:1} and
 \eqref{eq:2}.

\begin{ass}\label{ass:1.1}
(i)The coefficients $b$, $\sigma_1,\sigma_2 $ and $h$  are ${\mathscr P} \otimes {\mathscr
B} ({\mathbb R}^n)  \otimes {\mathscr B}
(U) $-measurable. For each $(x,
u) \in \mathbb {R}^n\times U$, $b (\cdot, x,u), \sigma_1(\cdot,x,u)$,
 $\sigma_2 (\cdot, x, u)$
and $h(\cdot, x,u)$ are all $\{\mathscr{F}_t\}_{t\in \cal T}$-adapted processes. For almost all $(t, \omega)\in {\cal T}
\times \Omega$, the mapping
\begin{eqnarray*}
(x,u) \rightarrow \psi(t,\omega,x, u)
\end{eqnarray*}
 is continuous differentiable with respect to $(x,u)$ with
appropriate growths, where $\psi=b, \sigma_1,  \sigma_2$ and $h.$  More precisely, there exists a constant $C
> 0$  such that for  all $x\in \mathbb
R^n, u\in U$ and a.e. $(t, \omega)\in {\cal T} \times \Omega,$
\begin{eqnarray*}
\left\{
\begin{aligned}
& (1+|x|+|u|)^{-1}|\alpha(t,x,u)|
+|\alpha_x(t,x,u)|
+|\alpha_u(t,x,u)|
\leq C, \alpha=b, \sigma_1,
\\&|\beta(t,x,u)| +|\beta_{x}(t,x,u)|
+|\beta_{u}(t,x,u)|\leq
C, \beta=h, \sigma_2,.
\end{aligned}
\right.
\end{eqnarray*}
(ii)The coefficient $f$ is  ${\mathscr P} \otimes {\mathscr
B} ({\mathbb R}^n)\otimes {\mathscr
B} ({\mathbb R}^m)\otimes {\mathscr
B} ({\mathbb R}^m) \otimes {\mathscr
B} ({\mathbb R}^m)  \otimes {\mathscr B}
(U) $-measurable. For each $(x,y,z_1,z_2,
u) \in \mathbb {R}^n\times \mathbb  R^m\times \mathbb R^m\times \mathbb R^m \times U$, $f (\cdot, x,y, z_1, z_2,u)$ is   $\{\mathscr{F}_t\}_{t\in \cal T}$-adapted processes. For almost all $(t, \omega)\in {\cal T}
\times \Omega$, the mapping
\begin{eqnarray*}
(x,y, z_1, z_2,u) \rightarrow f(t,\omega,x,y,z_1,z_2, u)
\end{eqnarray*}
 is continuous differentiable with respect to $(x,y,z_1,z_2,u)$ with
appropriate growths. More precisely, there exists a constant $C
> 0$  such that for  all $(x,y,z_1,z_2,
u) \in \mathbb {R}^n\times \mathbb  R^m\times \mathbb R^m\times \mathbb R^m \times U$ and a.e. $(t, \omega)\in {\cal T} \times \Omega,$
\begin{eqnarray}
\begin{split}
& (1+|x|+|y|+|z_1|+|z_2|+|u|)^{-1}
|f(t,x,y,z_1,z_2,u)|
+|f_x(t,x,y,z_1,z_2,u)|+|f_y(t,x,y,z_1,z_2,u)|
\\&+|f_{z_1}(t,x,y,z_1,z_2,u)|
+|f_{z_2}(t,x,y,z_1,z_2,u)|
+|f_u(t,x,y,z_1,z_2,u)|
\leq C.
\end{split}
\end{eqnarray}
(iii) The coefficient
$\phi$ is ${\mathscr F}_T \otimes {\mathscr B} ({\mathbb
R}^n) $-measurable.  For almost all $(t, \omega)\in [0,T]
\times \Omega$, the mapping

\begin{eqnarray*}
x \rightarrow \phi(\omega,x)
\end{eqnarray*}
 is continuous differentiable with respect to $x$ with
appropriate growths, respectively.
More precisely, there exists a constant $C
> 0$  such that for  all $x\in \mathbb {R}^n$ and a.e. $ \omega\in  \Omega,$

\begin{eqnarray}
\begin{split}
& (1+|x|)^{-1}
|\phi(x)|
+|\phi_x(x)|
\leq C.
\end{split}
\end{eqnarray}

\end{ass}

Now we
begin to discuss  the well- posedness
of \eqref{eq:1} and \eqref{eq:2}.
Indeed, putting \eqref{eq:2} into the state equation \eqref{eq:1}, we get that
\begin{eqnarray} \label{eq:4}
\left\{
\begin{aligned}
dx(t)=&(b-\sigma_2h)(t,x(t),u(t))dt+ \sigma_1(t, x(t), u(t)) dW(t)+\sigma _2(t, x(t), u(t)) dY(t),
\\
dy(t)=&(f(t,x(t),y(t),z_1(t),z_2(t),u(t))
-z_2(t)h(t,x(t),u(t)))dt+ z_1(t) dW(t)+  z_2(t) dY(t),\\
x(0)=&x,\\
y(T)=&\Phi(x(T)).
\end{aligned}
\right.
\end{eqnarray}
Under Assumption \ref{ass:1.1}, for any
admissible control $u(\cdot)\in \cal A,$
we have the following basic
result.

\begin{lemma}\label{lem:3.3}
  Let Assumption \ref{ass:1.1}
  be satisfied. Then for any
  admissible control $u(\cdot)\in
  \cal A,$  the  equation \eqref{eq:4} admits a unique strong
  solution $( x(\cdot),  y(\cdot), z_1(\cdot), z_2(\cdot))\in S_{\mathscr{F}}^4 (0,T;\mathbb R^{n})\times S_{\mathscr{F}}^4 (0,T;\mathbb R^{m})
  \times M_{\mathscr{F}}^2(0,T;L^2 (0,T; \mathbb R^m))\times M_{\mathscr{F}}^2(0,T;L^2 (0,T; \mathbb R^m)).$
Moreover, we have the following
estimate:

\begin{eqnarray}\label{eq:1.8}
\begin{split}
 {\mathbb E} \bigg [ \sup_{t\in \cal T} | x(t) |^4 \bigg
] +{\mathbb E} \bigg [ \sup_{t\in \cal T} | y(t) |^4 \bigg
]+{\mathbb E} \bigg [ \bigg(\int_0^T | z_1(t) |^2 dt\bigg)^{2}
+{\mathbb E} \bigg [ \bigg(\int_0^T | z_2(t) |^2 dt\bigg)^{2}\bigg
]&\leq& K\bigg \{1+|x|^4+\mathbb E\bigg[\Big(\int_0^T |u(t)|^2dt\Big)^{2}\bigg] \bigg\}.
\end{split}
\end{eqnarray}
Further, if $(\bar x(\cdot), \bar y(\cdot), \bar z_1(\cdot),
\bar z_2(\cdot))$ is the unique strong solution
corresponding to another admissible control  $ \bar u(\cdot)\in \cal A,$ then the following
estimate holds:
\begin{eqnarray}\label{eq:1.15}
\begin{split}
&{\mathbb E} \bigg [ \sup_{t\in \cal T} | x (t) - \bar x(t) |^4
\bigg ]+{\mathbb E} \bigg [ \sup_{t\in \cal T} | y (t) - \bar y(t) |^4
\bigg ]+{\mathbb E} \bigg [ \bigg(\int_0^T | z_1(t)-\bar z_1(t) |^2 dt\bigg)^{2}\bigg
]+{\mathbb E} \bigg [ \bigg(\int_0^T | z_2(t)-\bar z_2(t) |^2 dt\bigg)^{2}\bigg
]
\\&\leq  K {\mathbb E} \bigg [ \int_0^T| u(t)- \bar u (t)
|^2 dt\bigg ]^{2}.
\end{split}
\end{eqnarray}
\end{lemma}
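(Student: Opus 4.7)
The strategy is to decouple \eqref{eq:4} by first solving the forward SDE for $x$ alone, and then treating the BSDE for $(y,z_1,z_2)$ as a standard Lipschitz BSDE along the trajectory $x$. Under Assumption \ref{ass:1.1}, the drift $b-\sigma_2 h$ and the diffusion $\sigma_1$ are Lipschitz in $x$ with growth $C(1+|x|+|u|)$, while $\sigma_2$ is bounded and Lipschitz in $x$, so the forward SDE has a unique $\{\mathscr F_t\}$-adapted strong solution by classical contraction arguments. For the backward part, since $h$ is bounded, the effective driver $\widetilde f := f-z_2 h$ is uniformly Lipschitz in $(y,z_1,z_2)$, and its value at $(y,z_1,z_2)=0$ is pointwise dominated by $C(1+|x(t)|+|u(t)|)$; combined with the fact that the terminal value $\phi(x(T))$ lies in $L^4(\Omega,\mathscr F_T,\mathbb P)$ once $x\in S^4_{\mathscr F}$, the standard $L^p$-theory of Lipschitz BSDEs yields a unique solution $(y,z_1,z_2)$ in the claimed spaces.

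The estimates in \eqref{eq:1.8} are obtained by the It\^{o}--BDG--Young--Gronwall machinery. For the forward piece I would apply It\^{o}'s formula to $|x(t)|^4$, take $\sup_{s\leq t}$ and expectations, and use Burkholder--Davis--Gundy on the martingale parts. The only non-routine term is
\begin{eqnarray*}
\mathbb E\Big[\int_0^T|x(t)|^2|u(t)|^2\,dt\Big]\,\leq\,\mathbb E\Big[\sup_{t\in \cal T}|x(t)|^2\cdot\int_0^T|u(t)|^2\,dt\Big]\,\leq\,\varepsilon\,\mathbb E\Big[\sup_{t\in \cal T}|x(t)|^4\Big]+C_\varepsilon\,\mathbb E\Big[\Big(\int_0^T|u(t)|^2\,dt\Big)^2\Big],
\end{eqnarray*}
which follows from Cauchy--Schwarz and Young's inequality; absorbing the $\varepsilon$-term and invoking Gronwall closes the forward estimate. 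The BSDE counterpart on $\sup_t|y|^4$ and $\mathbb E[(\int_0^T|z_i|^2\,dt)^2]$ is obtained analogously, by applying It\^{o} to $|y(t)|^2$ and then squaring the resulting bracket before taking expectation---this is the standard device that lifts $L^2$-BSDE estimates to the $L^4$-level.

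For the stability bound \eqref{eq:1.15}, write $\delta x=x-\bar x$, $\delta y=y-\bar y$, $\delta z_i=z_i-\bar z_i$, $\delta u=u-\bar u$. The mean value theorem and the uniform boundedness of the first derivatives of $b,\sigma_1,\sigma_2,h,f,\phi$ imply that each increment of the coefficients is pointwise dominated by $C(|\delta x|+|\delta y|+|\delta z_1|+|\delta z_2|+|\delta u|)$, and rerunning the It\^{o}--BDG--Young--Gronwall argument on the difference system yields \eqref{eq:1.15}. The principal obstacle is preserving the sharpness of the $L^4$-bound on $u$: the argument depends crucially on the dichotomy in Assumption \ref{ass:1.1} that only $b$, $\sigma_1$, $f$ have linear growth in $u$ while $\sigma_2$ and $h$ are bounded. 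The boundedness of $h$ in particular is what keeps $\widetilde f$ Lipschitz in $z_2$ uniformly in $u$; without this structure one would be forced back to the stronger $L^8$-bound of \cite{WWX}.
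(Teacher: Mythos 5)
Your decoupled route — solve the forward SDE for $x$ alone, then treat the backward equation as a Lipschitz BSDE with driver $\widetilde f=f-z_2h$ and terminal datum $\phi(x(T))$, and run It\^o--BDG--Young--Gronwall at the $L^4$ level — is the natural reconstruction, and the paper itself offers nothing more explicit (its proof is a two-line citation of Proposition 2.1 of Mou--Yong and Lemma 2 of Hu). For existence, uniqueness and the first estimate \eqref{eq:1.8} your argument is sound: since $h$ is bounded, $\widetilde f$ is uniformly Lipschitz in $(y,z_1,z_2)$, and at $(y,z_1,z_2)=(0,0,0)$ the term $z_2h$ vanishes, so the free term is indeed $C(1+|x(t)|+|u(t)|)$ and the standard $L^p$-theory applies.

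There is, however, a genuine gap in the stability half. Your claim that ``each increment of the coefficients is pointwise dominated by $C(|\delta x|+|\delta y|+|\delta z_1|+|\delta z_2|+|\delta u|)$'' fails for the driver term $z_2h(t,x,u)$: its increment is $(z_2-\bar z_2)h(t,x,u)+\bar z_2\big(h(t,x,u)-h(t,\bar x,\bar u)\big)$, and the second piece is only dominated by $C|\bar z_2(t)|\,(|\delta x(t)|+|\delta u(t)|)$, with the unbounded factor $\bar z_2$, for which you only control $\mathbb E\big[(\int_0^T|\bar z_2|^2dt)^2\big]$. Rerunning your scheme you must estimate quantities like $\mathbb E\big[\big(\int_0^T|\bar z_2|(|\delta x|+|\delta u|)dt\big)^4\big]\leq \mathbb E\big[\big(\int_0^T|\bar z_2|^2dt\big)^2\big(\int_0^T(|\delta x|+|\delta u|)^2dt\big)^2\big]$, and any H\"older splitting of this product demands either $\mathbb E\big[(\int_0^T|\bar z_2|^2dt)^4\big]$ or $\mathbb E\big[(\int_0^T|\delta u|^2dt)^4\big]$ — precisely the $L^8$-type bounds the paper is claiming to dispense with; nor can the term be absorbed by Gronwall, since $|\bar z_2|$ is not a bounded coefficient. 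So the blanket mean-value/Lipschitz argument does not deliver \eqref{eq:1.15} under the $L^4$ hypothesis; this cross term is exactly the delicate point of the $L^8\to L^4$ improvement and needs a separate device (for instance exploiting the boundedness of $h$ through the Girsanov densities, which have moments of every order, or a weighted/conditional estimate that keeps $\bar z_2$ paired with quantities controlled in $\sup$-norm). As written, the stability estimate — which is what Lemma \ref{lem:3.5} and hence the variational formula rely on — is not proved.
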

\begin{proof}
  The proof can be directly obtained
  by  combining Proposition 2.1 in
  \cite{Mou} and Lemma 2
  in \cite{Xu95}.
\end{proof}

For  the strong solution
 $( x^u(\cdot),  y^u(\cdot), z_1^u(\cdot), z_2^u(\cdot))$ of  the equation
 \eqref{eq:4} associated with
 any given admissible control
$u(\cdot)\in \cal A,$ we  introduce a process
\begin{eqnarray}\label{eq:7}
\begin{split}
  \rho^u(t)
  =\displaystyle
  \exp^{\bigg\{\displaystyle\int_0^t h(s, x^u(s), u(s))dY(s)
  -\frac{1}{2} h^2(s, x^u(s), u(s))ds\bigg\}},
  \end{split}
\end{eqnarray}
which is abviously the solution to the following  SDE
\begin{eqnarray} \label{eq:8}
  \left\{
\begin{aligned}
  d \rho^u(t)=&  \rho^u(t) h(s, x^u(s), u(s))dY(s)\\
  \rho^u(0)=&1.
\end{aligned}
\right.
\end{eqnarray}

 For the stochastic process
 $\rho^u(\cdot),$  we have the
 following  basic result.
 \begin{lemma}\label{lem:3.4}
  Let Assumption \ref{ass:1.1} holds. Then for any $u(\cdot)\in \cal A,$ we have
 for any $\alpha \geq 2,$
\begin{eqnarray}\label{eq:1.9}
\begin{split}
 {\mathbb E} \bigg [ \sup_{{t\in
 \cal T}} | \rho^u(t) |^\alpha \bigg
] \leq K.
\end{split}
\end{eqnarray}
Further, if $ \bar \rho(\cdot)$ is the
process defined by \eqref{eq:7}
or \eqref{eq:8}
corresponding to another
 admissible control $ \bar u(\cdot)\in \cal A,$  then the following
estimate holds
\begin{eqnarray}\label{eq:1.15}
{\mathbb E} \bigg [ \sup_{t\in \cal T} |
\rho^u (t) - \bar \rho (t) |^2
\bigg ]  \leq  K \bigg\{{\mathbb E} \bigg [ \int_0^T| u(t)- \bar u (t)
|^2 dt\bigg ]^{{2}}\bigg\}^{\frac{1}{2}}.
\end{eqnarray}
\end{lemma}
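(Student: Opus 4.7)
The plan is to exploit that Assumption~\ref{ass:1.1} forces $h$, $h_x$, $h_u$ to be uniformly bounded by some constant $C$. This boundedness makes $\rho^u$ a genuine Doléans–Dade exponential of a BMO (in fact bounded) integrand, so the necessary a priori estimates follow from standard exponential martingale arguments plus Itô/BDG applied to the difference.

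For \eqref{eq:1.9}, I would write $(\rho^u(t))^\alpha = \mathcal{E}^\alpha(t)\cdot R(t)$, where
\begin{equation*}
\mathcal{E}^\alpha(t)=\exp\!\Bigl\{\alpha\!\int_0^t h(s,x^u(s),u(s))\,dY(s)-\frac{\alpha^2}{2}\!\int_0^t h^2(s,x^u(s),u(s))\,ds\Bigr\},\quad R(t)=\exp\!\Bigl\{\tfrac{\alpha^2-\alpha}{2}\!\int_0^t h^2\,ds\Bigr\}.
\end{equation*}
Since $|h|\le C$, the factor $R(t)$ is bounded by $\exp\{(\alpha^2-\alpha)C^2T/2\}$, and $\mathcal{E}^\alpha$ is a true martingale by Novikov's criterion (again because $h$ is bounded), so $\mathbb{E}[\mathcal{E}^\alpha(t)]=1$ for every $t$. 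Taking expectations gives $\mathbb{E}|\rho^u(t)|^\alpha\le K$ uniformly in $t$. Since $(\rho^u)^{\alpha/2}$ is a nonnegative submartingale (apply Jensen to $\rho^u$, which is a positive martingale), Doob's maximal inequality upgrades the pointwise bound to the supremum bound.

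For \eqref{eq:1.15}, let $\Delta\rho=\rho^u-\bar\rho$. From \eqref{eq:8}, $\Delta\rho$ satisfies a linear SDE driven by $dY$ with integrand
\begin{equation*}
\rho^u h(t,x^u,u)-\bar\rho\, h(t,\bar x,\bar u)=\Delta\rho\cdot h(t,x^u,u)+\bar\rho\bigl[h(t,x^u,u)-h(t,\bar x,\bar u)\bigr].
\end{equation*}
Using boundedness of $h$ and the Lipschitz bounds $|h_x|,|h_u|\le C$, I would apply Itô to $|\Delta\rho|^2$ and take the BDG/Doob supremum to obtain
\begin{equation*}
\mathbb{E}\sup_{s\le t}|\Delta\rho(s)|^2\le K\!\int_0^t\mathbb{E}|\Delta\rho(s)|^2ds+K\,\mathbb{E}\!\int_0^T\!\bar\rho(s)^2\bigl(|x^u(s)-\bar x(s)|^2+|u(s)-\bar u(s)|^2\bigr)ds.
\end{equation*}
The remaining task is to control the second term. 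I would apply Cauchy–Schwarz in $\omega$ to split the $\bar\rho$ factor from the state/control difference, then invoke part~(i) (with $\alpha=4$) to bound $\mathbb{E}[\sup_t\bar\rho^4]$ by a constant, Lemma~\ref{lem:3.3} to bound $\mathbb{E}[\sup_t|x^u-\bar x|^4]$ by $K\,\mathbb{E}[(\int_0^T|u-\bar u|^2dt)^2]$, and use the $L^4$ admissibility to bound $\mathbb{E}[(\int_0^T|u-\bar u|^2dt)^2]$ itself. Each of the two Cauchy–Schwarz applications contributes a square root, which is exactly the source of the outer exponent $1/2$ in \eqref{eq:1.15}. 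A Gronwall step in the resulting linear integral inequality concludes the argument.

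The main obstacle is tracking the exponents correctly: the final estimate has the unusual form $\{\mathbb{E}[\int|u-\bar u|^2dt]^2\}^{1/2}$, and one must resist the temptation to use Young's inequality (which would spoil the power), instead carefully pairing each difference factor against the $L^4$ moments of $\bar\rho$ and of $x^u-\bar x$ via Cauchy–Schwarz. Once the moment bookkeeping is right, the integrability assumption $\mathbb{E}[(\int_0^T|u|^2dt)^2]<\infty$ built into $\mathcal{A}$ is precisely what makes the right-hand side finite.
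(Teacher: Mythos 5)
Your proof is correct, and it is worth noting that the paper does not actually write out an argument for this lemma at all: it simply defers to Proposition 2.1 of Mou and Yong \cite{Mou}, i.e.\ to standard moment and stability estimates for SDEs with bounded, Lipschitz coefficients. Your self-contained derivation is exactly what that citation is meant to deliver, and the two ingredients you supply are the right ones. For the moment bound \eqref{eq:1.9}, the factorization $(\rho^u)^\alpha=\mathcal{E}^\alpha R$ with $R$ bounded (because $|h|\le C$ under Assumption \ref{ass:1.1}) and $\mathcal{E}^\alpha$ a true exponential martingale gives $\sup_t\mathbb{E}[(\rho^u(t))^\alpha]\le K$, and Doob's maximal inequality (applied as you do to the nonnegative submartingale $(\rho^u)^{\alpha/2}$ in $L^2$, or equivalently to the positive martingale $\rho^u$ in $L^\alpha$) upgrades this to the supremum. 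For the stability estimate, the decomposition of the integrand of $\Delta\rho=\rho^u-\bar\rho$ as $\Delta\rho\,h^u+\bar\rho(h^u-\bar h)$, It\^o/BDG plus Gronwall, and then a single Cauchy--Schwarz in $\omega$ pairing $\sup_t\bar\rho(t)^2$ (bounded via the first part with $\alpha=4$) against $\int_0^T(|x^u-\bar x|^2+|u-\bar u|^2)dt$ (bounded via Lemma \ref{lem:3.3} and admissibility) produce exactly the exponent $\tfrac12$ on the right-hand side, matching the statement. The only points you should make explicit in a full write-up are routine: a localization argument before taking expectations of the stochastic integral in the It\^o step, and the a priori finiteness of $\mathbb{E}\big[\sup_t|\Delta\rho(t)|^2\big]$ (which follows from the first part applied to both $\rho^u$ and $\bar\rho$) needed to run Gronwall.
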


\begin{proof}
  The proof can be directly obtained
  by  combining Proposition 2.1 in
  \cite{Mou}.
\end{proof}

Under Assumption \ref{ass:1.1},
$\rho^u(\cdot)$ is
an  $( \Omega,
{\mathscr F}, \{\mathscr{F}_t\}_{t\in {\cal T}}, {\mathbb P} )-$
martingale. Define a new probability measure $\mathbb P^u$ on $(\Omega, \mathscr F)$ by
\begin{eqnarray}
  d\mathbb P^u=\rho^u(1)d\mathbb P.
\end{eqnarray}
 Then from Girsanov's theorem and \eqref{eq:2}, $(W(\cdot),W^u(\cdot))$ is an
$\mathbb R^2$-valued standard Brownian motion defined in the new probability
space $(\Omega, \mathscr F, \{\mathscr{F}_t\}_{0\leq t\leq T},\mathbb P^u).$
So $(\mathbb P^u, x^u(\cdot), y^u(\cdot),
z^u_1(\cdot), z^u_2(\cdot), \rho^u(\cdot),
 W(\cdot), W^u(\cdot))$ is a weak
solution on $(\Omega, \mathscr F, \{\mathscr{F}_t\}_{t\in \cal
T})$ of  \eqref{eq:1} and
\eqref{eq:2}.

 The cost functional is given by
\begin{eqnarray}\label{eq:13}
  \begin{split}
    J(u(\cdot)=\mathbb E^u\bigg[\int_0^Tl(t,x(t),y(t),z_1(t),z_2(t), u(t))dt+ \Phi(x(T))+\gamma(y(0))\bigg].
  \end{split}
\end{eqnarray}
 where $\mathbb E^u$ denotes the expectation with respect to the
probability space $(\Omega, \mathscr F, \{\mathscr{F}_t\}_{0\leq
t\leq T},\mathbb P^u)$ and  $l:
{\cal T} \times \Omega \times {\mathbb R}^n \times {\mathbb R}^m
\times {\mathbb R}^m\times {\mathbb R}^m
 \times U \rightarrow {\mathbb R},$ $\Phi: \Omega \times {\mathbb R}^n  \rightarrow {\mathbb R}$
 and  $\gamma: \Omega \times {\mathbb R}^m \rightarrow {\mathbb R}$
  are given random mappings
satisfying  the following assumption:

 \begin{ass}\label{ass:1.2}
 $l$ is ${\mathscr P} \otimes {\mathscr
B} ({\mathbb R}^n) \otimes {\mathscr B} ({\mathbb R}^m)\otimes {\mathscr B} ({\mathbb R}^m) \otimes {\mathscr B} ({\mathbb R}^m)  \otimes {\mathscr B}
(U) $-measurable,  $\Phi$ is ${\mathscr F}_T \otimes {\mathscr B} ({\mathbb
R}^n) $-measurable, and $\gamma$ is ${
\mathscr F}_0 \otimes {\mathscr B} ({\mathbb
R}^n) $-measurable. For each $(x,y, z_1,z_2,
u) \in \mathbb {R}^n\times \mathbb R^m
\times \mathbb R^m\times \mathbb R^m\times U$, $f (\cdot, x, y,z_1,z_2, u)$ is  an ${\mathbb F}$-adapted process,  $\Phi(x)$
is an ${\mathscr F}_{T}$-measurable random variable, and  $\gamma(y)$
is an ${\mathscr F}_{0}$-measurable random variable. For almost all $(t, \omega)\in [0,T]
\times \Omega$, the mappings
\begin{eqnarray*}
(x,y,z_1, z_2,u) \rightarrow l(t,\omega,x, y, z_1,z_2, u),
\end{eqnarray*}
\begin{eqnarray*}
x \rightarrow \Phi(\omega,x)
\end{eqnarray*}
and
\begin{eqnarray*}
y\rightarrow \gamma(\omega,y)
\end{eqnarray*}
are continuous differentiable with respect to $(x, y,z_1, z_2,u)$ with
appropriate growths, respectively.
More precisely, there exists a constant $C
> 0$  such that for  all $(x,y, z_1,z_2,
u) \in \mathbb {R}^n\times \mathbb R^m
\times \mathbb R^m\times \mathbb R^m\times U$ and a.e. $(t, \omega)\in [0,T]
\times \Omega,$
\begin{eqnarray*}
\left\{
\begin{aligned}
&
(1+|x|+|y|+|z_1|+|z_2|+|u|)^{-1} (|l_x(t,x,y,z_1,z_2,u)|+|l_y(t,x,y,z_1,z_2,u)|+
|l_{z_1}(t,x,y,z_1,z_2,u)|
\\&\quad\quad+|l_{z_2}(t,x,y,z_1,z_2,u)|
+|l_u(t,x,y,z_1,z_2,u)|)
+(1+|x|^2+|y|^2+|z_1|^2
+|z_2|^2
+|u|^2)^{-1}|l(t,x,y,z_1,z_2,u)|
 \leq C;
\\
& (1+|x|^2)^{-1}|\Phi(x)| +(1+|x|)^{-1}|\Phi_x(x)|\leq
C,
\\
& (1+|y|^2)^{-1}|\gamma(y)| +(1+|y|)^{-1}|\gamma_y(y)|\leq
C.
\end{aligned}
\right.
\end{eqnarray*}
\end{ass}
 Under  Assumption \ref{ass:1.1} and
 \ref{ass:1.2},
 by the estimates \eqref{eq:1.8} and
 \eqref{eq:1.9},  we get that

\begin{eqnarray}
  \begin{split}
    |J(u(\cdot))|\leq & K  \bigg\{\mathbb E\bigg[\sup_{t\in {\cal T}}
    |\rho^u(t)|^2\bigg]\bigg\}^{\frac{1}{2}}
    \bigg\{ \mathbb E\bigg[\sup_{
    {t\in \cal T}}
    |x(t)|^4\bigg]+\mathbb E\bigg[\sup_{
    {t\in \cal T}}
    |y(t)|^4\bigg]
+\mathbb E\bigg[\bigg(\int_0^T|z_1(t)|^2
dt\bigg)^{2}\bigg]
\\&\quad\quad+\mathbb E\bigg[\bigg(\int_0^T|z_2(t)|^2
dt\bigg)^{2}\bigg]+\mathbb E\bigg[\bigg(\int_0^T|u(t)|^2
dt\bigg)^{2}\bigg] +1\bigg\}^{\frac{1}{2}}
    \\  <& \infty,
  \end{split}
\end{eqnarray}
which implies that
the cost functional is well-defined.

Then we  can put forward the following partially observed optimal control problem  in its weak formulation,
 i.e., with changing
 the reference probability space $(\Omega, \mathscr F, \{\mathscr{F}_t\}_{0\leq
t\leq T},\mathbb P^u),$ as follows.

\begin{pro}
\label{pro:1.1} Find an admissible control $\bar{u}(\cdot)\in \cal$ such that
\begin{equation*}  \label{eq:b7}
J(\bar{u}(\cdot))=\displaystyle\inf_{u(\cdot)
\in \cal A}J(u(\cdot)),
\end{equation*}
subject to the
 state equation \eqref{eq:1}, the
 observation equation \eqref{eq:2}
 and the cost functional \eqref{eq:13}.

\end{pro}
Obviously, according to  Bayes' formula,
 the cost functional \eqref{eq:13} can be rewritten as
\begin{equation}\label{eq:15}
\begin{split}
J(u(\cdot ))=& \mathbb E\displaystyle\bigg[%
\int_{0}^{T}\rho^u(t)l(t,x(t),y(t),z_1(t),z_2(t), u(t))dt +\rho^u(T)\Phi(x(T))
+\gamma(y(0))\bigg].
\end{split}
\end{equation}
  Therefore, we  can translate   Problem \ref{pro:1.1}
  into  the following  equivalent optimal control problem in
  its strong formulation, i.e., without changing the  reference
probability space $(\Omega, \mathscr F, \{\mathscr{F}_t\}_{0\leq t\leq T},\mathbb P),$
where $\rho^u(\cdot)$ will be regarded as
an additional  state process besides the
state process $(x^u(\cdot), y^u(\cdot),
z^u_1(\cdot), z^u_2(\cdot)).$

\begin{pro}
\label{pro:1.2} Find an admissible control $\bar{u}(\cdot)$ such that
\begin{equation*}  \label{eq:b7}
J(\bar{u}(\cdot))=\displaystyle\inf_{u(\cdot)
\in \cal A}J(u(\cdot)),
\end{equation*}
 subject to
 the cost functional \eqref{eq:15}
 and  the following
 state equation
\begin{equation}
\displaystyle\left\{
\begin{array}{lll}
dx(t)=&(b-\sigma_2h)(t,x(t),u(t))dt+ \sigma_1(t, x(t), u(t)) dW(t)+\sigma _2(t, x(t), u(t)) dY(t),
\\
dy(t)=&(f(t,x(t),y(t),z_1(t),z_2(t),u(t))
-z_2(t)h(t,x(t),u(t)))dt+ z_1(t) dW(t)+  z_2(t) dY(t),\\
d \rho^u(t)=&  \rho^u(t) h(s, x^u(s), u(s))dY(s),\\
  \rho^u(0)=&1,
\\
x(0)=&x,\\
y(T)=&\Phi(x(T)).
\end{array}%
\right.  \label{eq:3.7}
\end{equation}
\end{pro}
Any $\bar{u}(\cdot)\in \cal A$ satisfying above is called an optimal
control process of Problem \ref{pro:1.2} and the corresponding state
process $(\bar x(\cdot),
\bar y(\cdot),
\bar z_1(\cdot), \bar z_2(\cdot),
\bar \rho(\cdot))$ is called the optimal
state process. Correspondingly $(\bar{u}(\cdot);\bar x(\cdot),
\bar y(\cdot),
\bar z_1(\cdot), \bar z_2(\cdot),
\bar \rho(\cdot))$ is called an optimal pair of Problem \ref{pro:1.2}.
\begin{rmk}
 The present formulation
of the partially observed optimal control problem is quite similar
to a completely observed optimal control problem; the only
difference lies in the admissible class $\cal A$ of controls.
\end{rmk}

\section{Stochastic Maximum Principle}

This section is devoted to establishing
   the stochastic maximum principle of
   Problem \ref{pro:1.1} or   Problem \ref{pro:1.2},
   i.e., establishing the necessary and
   sufficient optimality conditions of Pontryagin's type for
   an admissible control to be optimal. To this end,
for the state equation \eqref{eq:3.7},
we first introduce the
corresponding adjoint equation.

Define the Hamiltonian
 ${\cal H}: \Omega \times {\cal T} \times \mathbb R^n \times \mathbb R^m \times \mathbb R^{m}
  \times \mathbb R^{m}\times U\times
\mathbb R^n\times \mathbb R^n
\times \mathbb R^n
  \times \mathbb R^{m}
  \times \mathbb R\rightarrow \mathbb R$  as follows:
\begin{eqnarray}\label{eq6}
&& {\cal H} (t, x, y, z_1, z_2,u, p,q_1, q_2, k, R_2) \nonumber \\
&& = l (t, x, y, z_1,z_2, u)
+ \langle b (t, x, u),  p \rangle
+ \langle \sigma_1(t, x, u), q_1\rangle + \langle \sigma_2 (t, x, u), q_2\rangle  +
\langle f(t,x,y,z_1,z_2,u), k\rangle  +\langle R_2, h(t,x,u)\rangle \ .
\end{eqnarray}

For any given admissible control pair $(\bar u(\cdot); \bar x(\cdot),
 \bar y(\cdot), \bar z_1(\cdot),
 \bar z_2(\cdot)), $ the corresponding adjoint equation is defined as follows.

\begin{numcases}{}\label{eq:18}
\begin{split}
d\bar r(t)&=- l(t,\bar x(t),
 \bar y(t), \bar z_1(t),
 \bar z_2(t), \bar u(t))dt
 +\bar R_1\left(
t\right)  dW\left(  t\right) +{\bar R}_{2}\left( t\right) dW^{
\bar u}\left( t\right),
\\
d\bar p\left(  t\right)
 &=-{\cal {H}}_{x}\left( t,\bar x(t),
 \bar y(t), \bar z_1(t),
 \bar z_2(t), \bar u(t) \right)dt
 +\bar q_1
\left(  t\right)  dW\left(  t\right)  +{\bar q}_{2}\left( t\right) dW^{
\bar u}\left( t\right),
\\
d\bar k\left(  t\right)  &=-{\cal {H}}_{y}\left(  t,\bar x(t),
 \bar y(t), \bar z_1(t),
 \bar z_2(t), \bar u(t) \right)dt-
{\cal {H}}_{z_1}\left(  t,\bar x(t),
 \bar y(t), \bar z_1(t),
 \bar z_2(t), \bar u(t) \right)  dW\left(  t\right)
  \\&\quad\quad-{\cal {H}}_{z_2}\left(  t,\bar x(t),
 \bar y(t), \bar z_1(t),
 \bar z_2(t), \bar u(t) \right)dW^{
u}\left( t\right),
\\ \bar p(T)&=\Phi_x(\bar x(T))-\phi_x^*(\bar x(T))\bar k(T),
\\ \bar r(T)&=\Phi(\bar x(T)),
\\ \bar k(0)&=-\gamma_y(\bar y(0)).
\end{split}
\end{numcases}
Here we have used the following notation
\begin{eqnarray}\label{eq:19}
\begin{split}
  &{\cal H}_a(t,\bar x(t),
 \bar y(t), \bar z_1(t),
 \bar z_2(t), \bar u(t))
 \\=&{\cal {H}}_{a}\left(  t,\bar x(t),
 \bar y(t), \bar z_1(t),
 \bar z_2(t), \bar u(t), \bar p(t), \bar q_1(t), \bar q_2(t),
  \bar k(t), \bar R_2(t)-\bar\sigma_2^*(t, x(t), u(t))\bar p(t)-\bar z_2 ^*(t)
  \bar k(t) \right)
  \end{split}
\end{eqnarray}
where $a=x, y,z_1, z_2,u.$

Note the adjoint equation \eqref{eq:18}
is a forward-backward
stochastic differential equation whose solution
consists of  an 7-tuple process $(\bar p(\cdot),\bar q_1(\cdot), \bar { q}_2(\cdot),\bar k(\cdot),\bar r(\cdot),\bar R_1(\cdot),\bar R_2(\cdot ) ).$
Under Assumptions \ref{ass:1.1} and
\ref{ass:1.2},  by Proposition 2.1 in
  \cite{Mou} and Lemma 2
  in \cite{Xu95} ,
it is easily to see that  the adjoint equation \eqref{eq:18} admits
a unique solution $(\bar p(\cdot),\bar q_1(\cdot), \bar { q}_2(\cdot),\bar k(\cdot),\bar r(\cdot),\bar R_1(\cdot),\bar R_2(\cdot ) )\in S_{\mathscr{F}}^4(0,T;
\mathbb R^n)\times
M_{\mathscr{F}}^2(0,T;L^2 (0,T; \mathbb R^n)) \times M_{\mathscr{F}}^2(0,T;L^2 (0,T; \mathbb R^n))\times S_{\mathscr{F}}^4(0,T;
\mathbb R^m)\times S_{\mathscr{F}}^4(0,T;
\mathbb R)\times M_{\mathscr{F}}^2(0,T;L^2 (0,T; \mathbb R))\times M_{\mathscr{F}}^2(0,T;L^2 (0,T; \mathbb R^n)),
$  also called the adjoint process corresponding
the admissible pair $(\bar{u}(\cdot);\bar x(\cdot),
\bar y(\cdot),
\bar z_1(\cdot), \bar z_2(\cdot),
\bar \rho(\cdot))$.

Let $({u}(\cdot);x^u(\cdot),
 y^u(\cdot),
 z^u_1(\cdot),  z^u_2(\cdot),
\rho^u(\cdot))$ and $(\bar{u}(\cdot);\bar x(\cdot),
\bar y(\cdot),
\bar z_1(\cdot), \bar z_2(\cdot),
\bar \rho(\cdot))$ be
two admissible pairs. Next we represent the difference $J (u (\cdot)) - J (\bar u (\cdot))$
in terms of the Hamiltonian ${\cal H}$ and the adjoint process $(\bar p(\cdot),\bar q_1(\cdot), \bar { q}_2(\cdot),\bar k(\cdot),\bar r(\cdot),\bar R_1(\cdot),\bar R_2(\cdot ) )$
as well as other relevant expressions.

To unburden our notation, we will use the
following abbreviations:
\begin{eqnarray}\label{eq:3.16}
\left\{
\begin{aligned}
& \gamma^u (0)
= \gamma( y^u (0)) \ ,
 \bar \gamma(0) =
 \gamma( \bar y (0)), \
 \\& \phi^u (T)
= \phi( x^u (T)),
 \bar \phi(T) =  \phi( \bar x (T)) \ ,
\\& \Phi^u (T)
= \Phi( x^u (T)),
 \bar \Phi(T) =  \Phi( \bar x (T)),\\
& \alpha^u (t) = \alpha ( t, x^u (t),u (t) ),\\
& \bar \alpha (t) = \alpha( t, \bar x (t), \bar
u(t) ), \quad \alpha =  b, \sigma_1,
\sigma_2, h,
\\
& \beta^u (t) = \alpha ( t, x^u (t),
y^u(t), z^u_1(t),z^u_2(t),
u (t) ),\\
& \bar \beta (t) = \alpha( t, \bar x (t), \bar y(t), \bar z_1(t),\bar z_2(t),\bar
u(t) ), \quad \beta =  f, l.
\end{aligned}
\right.
\end{eqnarray}

\begin{lemma}\label{lem4}
Let Assumptions \ref{ass:1.1} and \ref{ass:1.2} be satisfied.
Using the notations \eqref{eq:19} and
\eqref{eq:3.16},  we have
\begin{eqnarray}\label{eq:21}
&&J (u (\cdot)) - J (\bar u(\cdot))\nonumber
\\ &=& {\mathbb E} ^{\bar u}
\bigg[\int_0^T \bigg [ { \cal H}(t, x^u(t),
y^u(t), z^u_1(t), z^u_2(t), u(t)) - {\cal H} (t, \bar x(t),
\bar y(t), \bar z_1(t),\bar z_2(t),  \bar u(t)) \nonumber \\&& - \big < {\cal H}_x (t, \bar x(t),
\bar y(t), \bar z_1(t), \bar z_2(t),  \bar u(t)),x^u (t) - \bar
x (t)
\big>\nonumber
\\&&- \big <{ \cal H}_y (t, \bar x(t),
\bar y(t), \bar z_1(t), \bar z_2(t),  \bar u(t)),
y^u (t) - \bar
y (t)\big>\nonumber
\\&&- \big <{\cal H}_{z_1} (t, \bar x(t),
\bar y(t), \bar z_1(t),\bar z_2(t),  \bar u(t)),
 z^u_1 (t) - \bar
z_1 (t)\big>\nonumber
\\&&- \big <{\cal H}_{z_2} (t, \bar x(t),
\bar y(t), \bar z_1(t),\bar z_2(t),  \bar u(t)),
 z^u_2 (t) - \bar
z_2 (t)\big>\nonumber
\\&&-
 \langle (\sigma_2^u(t)-\bar{\sigma}_2(t))
 (h^u(t)-\bar
 h(t)), \bar p(t)\rangle \nonumber
 \\&& -
 \langle (z_2^u(t)-\bar{z}_2(t))
 (h^u(t)-\bar
 h(t)), \bar k(t)\rangle dt\bigg]
\nonumber \\
&& + {\mathbb E}^{\bar u} \big [ \Phi^u(T)
 - \bar \Phi (T) - \left <
x^u (T) - \bar x (T), \bar \Phi_x (T)  \right
> \big ]
\nonumber \\
&& - {\mathbb E}^{\bar u}
\big [ \la \phi^u (T) - \bar \phi (T), \bar k(T)\ra - \left <
\bar \phi_x^*(T)\bar k(T),
x^u (T) - \bar x (T) \right
> \big ]
\nonumber \\
&& + {\mathbb E}
 \big [ \gamma^u(0) - \bar \gamma (0) - \left <
y^u (0)
- \bar y (0), \bar \gamma_y (0)  \right
> \big ]
\nonumber\nonumber
\\&&+\mathbb E\bigg[\int_0^T \bar R_2(t)(\rho^u(t)-\rho^{\bar u}(t))(h^u(t)-\bar
 h(t))dt\bigg] \nonumber
\\&&+\mathbb E\bigg[\int_0^T(l^u(t)-\bar l(t))( \rho^u(t)-\bar
\rho(t))dt\bigg]\nonumber
\\&&+\mathbb E \bigg[ ( \rho^u (T) - \bar \rho(T)) (\Phi^u (T) -
\bar \Phi (T))\bigg].
\end{eqnarray}
\end{lemma}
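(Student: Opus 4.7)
\textbf{Proof proposal for Lemma \ref{lem4}.} The plan is to run a duality argument in three passes: one pass under $\mathbb{P}$ for the density process $\rho^u-\bar\rho$, and two passes under $\mathbb{P}^{\bar u}$ for the $(x,y)$ components, then assemble everything using the definition \eqref{eq6} of $\mathcal H$.

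First, using the strong-formulation representation \eqref{eq:15} of the cost and the algebraic identity $\rho^u l^u-\bar\rho\bar l=\bar\rho(l^u-\bar l)+(\rho^u-\bar\rho)\bar l+(\rho^u-\bar\rho)(l^u-\bar l)$ (and the analogous decomposition at the terminal time for $\rho^u(T)\Phi^u(T)-\bar\rho(T)\bar\Phi(T)$), I would split $J(u(\cdot))-J(\bar u(\cdot))$ into four groups: (a) $\mathbb{E}^{\bar u}$ of the $l^u-\bar l$ integral and $\Phi^u(T)-\bar\Phi(T)$; (b) the $\gamma$-term at $t=0$; (c) the ``pure density'' group $\mathbb{E}\int(\rho^u-\bar\rho)\bar l\,dt+\mathbb{E}[(\rho^u(T)-\bar\rho(T))\bar\Phi(T)]$; (d) the two quadratic-type correction terms already displayed in \eqref{eq:21}. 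Groups (b) and (d) appear verbatim in the target formula, so the work reduces to rewriting (a) and (c).

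For (c), I would apply It\^o's formula to $\bar r(t)(\rho^u(t)-\bar\rho(t))$ under $\mathbb{P}$. Since under $\mathbb{P}$ the observation $Y$ is a standard Brownian motion and $dW^{\bar u}=dY-\bar h\,dt$, the equation for $\bar r$ rewrites as $d\bar r=(-\bar l-\bar R_2\bar h)dt+\bar R_1 dW+\bar R_2 dY$, while $d(\rho^u-\bar\rho)=(\rho^u h^u-\bar\rho\bar h)dY$. The cross-variation contributes $\bar R_2(\rho^u h^u-\bar\rho\bar h)dt$; taking expectations, using $\rho^u(0)-\bar\rho(0)=0$ and $\bar r(T)=\bar\Phi(T)$, and simplifying $\bar R_2(\rho^u h^u-\bar\rho\bar h)-\bar R_2\bar h(\rho^u-\bar\rho)=\bar R_2\rho^u(h^u-\bar h)$, I obtain the identity
\begin{equation*}
\mathbb{E}\!\int_0^T(\rho^u-\bar\rho)\bar l\,dt+\mathbb{E}[(\rho^u(T)-\bar\rho(T))\bar\Phi(T)]=\mathbb{E}\!\int_0^T\bar R_2\rho^u(h^u-\bar h)\,dt.
\end{equation*}
Finally splitting $\rho^u=\bar\rho+(\rho^u-\bar\rho)$ on the right produces the desired $\bar R_2(\rho^u-\bar\rho)(h^u-\bar h)$ term of \eqref{eq:21} plus an extra $\mathbb{E}^{\bar u}\!\int_0^T\bar R_2(h^u-\bar h)\,dt$, which will be absorbed by the Hamiltonian in the next step.

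For (a), I would first use \eqref{eq6} to write $l^u-\bar l=(\mathcal H^u-\bar{\mathcal H})-\langle b^u-\bar b,\bar p\rangle-\langle\sigma_1^u-\bar\sigma_1,\bar q_1\rangle-\langle\sigma_2^u-\bar\sigma_2,\bar q_2\rangle-\langle f^u-\bar f,\bar k\rangle-\langle\bar R_2,h^u-\bar h\rangle+\langle\bar p,\bar\sigma_2(h^u-\bar h)\rangle+\langle\bar k,\bar z_2(h^u-\bar h)\rangle$, where the last two terms come from the convention \eqref{eq:19} for the $R_2$-slot. Then, under $\mathbb{P}^{\bar u}$ where $(W,W^{\bar u})$ is a standard $2$-D Brownian motion, I would apply It\^o to $\langle\bar p(t),x^u(t)-\bar x(t)\rangle$ and to $\langle\bar k(t),y^u(t)-\bar y(t)\rangle$, using the $\mathbb{P}^{\bar u}$-dynamics $d(x^u-\bar x)=[b^u-\bar b-\sigma_2^u(h^u-\bar h)]dt+(\sigma_1^u-\bar\sigma_1)dW+(\sigma_2^u-\bar\sigma_2)dW^{\bar u}$ and the analogous expression for $y^u-\bar y$. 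The terminal/initial conditions $\bar p(T)=\bar\Phi_x-\bar\phi_x^*\bar k(T)$, $y^u(T)-\bar y(T)=\phi^u(T)-\bar\phi(T)$ and $\bar k(0)=-\bar\gamma_y$ generate precisely the three boundary brackets in \eqref{eq:21}. The $\langle\sigma_i^u-\bar\sigma_i,\bar q_i\rangle$ pairs cancel against the cross-variation terms of $\langle\bar p,x^u-\bar x\rangle$, while the $-\sigma_2^u(h^u-\bar h)$ and $-z_2^u(h^u-\bar h)$ contributions combine with the $+\bar\sigma_2(h^u-\bar h)$, $+\bar z_2(h^u-\bar h)$ remainders from the Hamiltonian identity to yield exactly the two correction terms $-\langle(\sigma_2^u-\bar\sigma_2)(h^u-\bar h),\bar p\rangle$ and $-\langle(z_2^u-\bar z_2)(h^u-\bar h),\bar k\rangle$. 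Finally, the leftover $\mathbb{E}^{\bar u}\!\int_0^T\bar R_2(h^u-\bar h)dt$ inherited from step (c) is absorbed when the $-\langle\bar R_2,h^u-\bar h\rangle$ piece of the Hamiltonian expansion is moved to the other side, closing the identity.

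The main obstacle I expect is bookkeeping rather than mathematics: correctly tracking the change of drift $dW^{\bar u}=dY-\bar h\,dt$ across the two measures, so that the $\bar R_2$-slot in \eqref{eq:19} is exactly what makes the $(h^u-\bar h)$-type cross terms telescope into the two ``quadratic'' correction lines in \eqref{eq:21}. The integrability needed to kill all stochastic integrals (so expectations of local-martingale parts vanish) is supplied by Lemmas \ref{lem:3.3} and \ref{lem:3.4} together with the $L^4$-admissibility of $u(\cdot)$, and no additional estimate is required.
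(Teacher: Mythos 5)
Your proposal is correct and follows essentially the same route as the paper: the same density decomposition of $J(u(\cdot))-J(\bar u(\cdot))$, It\^o's formula applied to $(\rho^u-\bar\rho)\bar r$ under $\mathbb P$ (giving $\mathbb E\int_0^T\bar R_2\rho^u(h^u-\bar h)dt$) and to $\langle\bar p,x^u-\bar x\rangle+\langle\bar k,y^u-\bar y\rangle$ under $\mathbb P^{\bar u}$, together with the expansion of $l^u-\bar l$ through the Hamiltonian with the shifted $R_2$-slot of \eqref{eq:19}. The only cosmetic difference is that you split $(\rho^u-\bar\rho)l^u$ into $(\rho^u-\bar\rho)\bar l+(\rho^u-\bar\rho)(l^u-\bar l)$ at the outset rather than at the final assembly, which changes nothing.
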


\begin{proof}
  From \eqref{eq:2}, it is easy to check that the $(x^u(\cdot), y^u(\cdot), z^u(\cdot))$ satisfies the following FBSDE:

\begin{eqnarray}
\left\{
\begin{aligned}
dx(t)=&\big[b^{u}(t)+\sigma _2^u(t)(\bar  h(t)-h^u(t))\big]dt+
\sigma^u_1(t) dW(t)+\sigma _2^u(t) dW^{\bar u}(t)
\\
dy(t)=&\big[f^u(t)+z _2(t)( \bar h(t)- h^u(t))\big]dt+ z_1(t) dW(t)+  z_2(t) dW^{\bar u}(t)\\
x(0)=&x,\\
y(T)=&\phi(x(T))
\end{aligned}
\right.
\end{eqnarray}
Therefore $(x^u(t)-\bar x(t),
y^u(t)-\bar y(t), z^u(t)-\bar z(t))$ satisfies the following FBSDE:

\begin{eqnarray}
\left\{
\begin{aligned}
dx(t)-\bar x(t)=&\big[b^u(t)-\bar b(t)+\sigma _2^u(t)( h^{\bar u}(t)-h^u(t))\big]dt+ [\sigma^u_1(t)-\bar\sigma_1(t)] dW(t)+[\sigma _2^u(t)-\bar\sigma _2(t)]dW^{\bar u}(t)
\\
dy(t)-\bar y(t)=&\big[f^u(t)-\bar f(t) +z _2(t)( \bar h(t)-h(t))\big]dt+ [z_1(t)-\bar z_1(t)] dW(t)+  [z_2(t)-\bar z_2(t)] dW^{\bar u}(t)\\
x(0)-\bar x(0)=&0,\\
y(T)-\bar y(T)=&\phi(x(T))-\phi(\bar x(T)).
\end{aligned}
\right.
\end{eqnarray}

From \eqref{eq:18}, we know that
$(\bar p(\cdot),\bar q_1(\cdot),\bar q_2(\cdot),\bar k(\cdot) )$ satisfies the
following   FBSDE

\begin{numcases}{}\label{eq:3.3}
\begin{split}
d\bar p\left(  t\right)
 &=-{\cal {H}}_{x}\left( t,\bar x(t),
 \bar y(t), \bar z_1(t),
 \bar z_2(t), \bar u(t) \right)dt
 +\bar q_1
\left(  t\right)  dW\left(  t\right)  +{\bar q}_{2}\left( t\right) dW^{
\bar u}\left( t\right),
\\
d\bar k\left(  t\right)  &=-{\cal {H}}_{y}\left(  t,\bar x(t),
 \bar y(t), \bar z_1(t),
 \bar z_2(t), \bar u(t) \right)dt-
{\cal {H}}_{z_1}\left(  t,\bar x(t),
 \bar y(t), \bar z_1(t),
 \bar z_2(t), \bar u(t) \right)  dW\left(  t\right)
  \\&\quad\quad-{\cal {H}}_{z_2}\left(  t,\bar x(t),
 \bar y(t), \bar z_1(t),
 \bar z_2(t), \bar u(t) \right)dW^{
\bar u}\left( t\right),
\\ \bar p(T)&=\bar\Phi_x(T)-\bar \phi_x^*(T)\bar k(T),
\\ \bar k(0)&=-\bar \gamma_y(0),
\end{split}
\end{numcases}

It is easy to check that $(\bar r(\cdot),
\bar R_1(\cdot),
\bar R_2(\cdot))$  satisfying the following BSDE

\begin{numcases}{}\label{eq:3.3}
\begin{split}
d\bar r(t)&=-[\bar l(t)+\bar R_2(t)\bar h(t)]dt+\bar R_1\left(
t\right) dW\left(  t\right) +{\bar R}_{2}\left( t\right) dY(t),
\\ \bar r(T)&=\bar\Phi(T),
\end{split}
\end{numcases}

From  the definition of the cost function $J(u(\cdot)),$
we have

\begin{eqnarray}\label{eq:26}
\begin{split}
 J(u(\cdot))-J(\bar u(\cdot))=&
 \mathbb E^u\bigg[\int_0^Tl^u(t)dt+ \Phi^u(T)+\gamma^u(0)\bigg]-\mathbb E^{\bar u}\bigg[\int_0^T\bar l(t)dt+ \bar\Phi(T)+\bar\gamma(0)\bigg]
 \\=& \mathbb E\bigg[\int_0^T (\rho^u(t)l^u(t)-\bar\rho(t)\bar l(t))dt\bigg]+ \mathbb E[\rho^u(T)\Phi^u(T)-\bar \rho (t)\bar \Phi(T)]
 +\mathbb E[\gamma^u(0)-\bar \gamma(0)]
 \\=& \mathbb E^{\bar u}
 \bigg[\int_0^T [l^u(t)-\bar l(t)]dt
 \bigg]+ \mathbb E^{\bar u}[\Phi^u(T)-\bar \Phi(T)]+ \mathbb E\bigg[\int_0^T (\rho^u(t)-\bar\rho(t)) l^u(t)dt\bigg]
\\&+ \mathbb E[(\rho^u(T)-\bar \rho (T))\Phi^u(T)]
 +\mathbb E[\gamma^u(0)-\bar \gamma(0)]
\end{split}
\end{eqnarray}

From the definition of $\cal H,$ we get that

\begin{eqnarray}\label{eq:27}
  \begin{split}
   \mathbb E^{\bar u}\bigg[\int_0^T (l^u(t)-\bar l(t))dt\bigg]=&{\mathbb E} ^{\bar u}
\bigg[\int_0^T \bigg ( { \cal H}(t, x^u(t),
y^u(t), z^u_1(t), z^u_2(t), u(t)) - {\cal H} (t, \bar x(t),
\bar y(t), \bar z_1(t),\bar z_2(t),  \bar u(t))\bigg)dt\bigg]
\\&- \mathbb E^{\bar u}\bigg[\int_0^T \bigg (\langle \bar p(t), b^u (t) - \bar b (t)\rangle + \langle \bar q_1(t), \sigma_1^u(t) - \bar \sigma_1 (t)\rangle
+\langle \bar  q_2(t), \sigma _2^u(t) - {\bar \sigma}_2 (t)
\rangle
\\&+ \langle \bar k(t),  f^u(t) - \bar f (t)\rangle+\langle \bar R_2(t)-\bar \sigma_2^*(t)\bar p(t)-\bar z_2 ^*(t)\bar k(t),h^u(t)-\bar h(t)\rangle \bigg)dt\bigg]
  \end{split}
\end{eqnarray}

Applying It\^{o} formula to $\la \bar p(t), x^u(t)-\bar x(t)\ra
+\la \bar k(t), y^u(t)-\bar y(t)\ra$ and taking expectation under $P^{\bar u}$, we have

\begin{eqnarray}
   && \mathbb E^{\bar u} \big[ \la \bar \Phi_x(T)-\bar\phi_x^*(T)\bar k(T), x^u(T)-\bar x(T)\ra\big]+\mathbb E^{\bar u} \big[ \la \bar k(T), \phi^u(T)-\bar \phi(T)\ra\big]\nonumber
\\=&&\mathbb E^{\bar u}\bigg[\int_0^T \bigg (\langle \bar p(t), b^u (t) - \bar b (t)\rangle + \langle \bar q_1(t), \sigma_1^u (t) - \bar \sigma_1 (t)\rangle
+ \langle \bar q_2(t), \sigma _2^u(t) - {\bar \sigma}_2 (t)\rangle
\nonumber
\\&&+ \langle \bar k(t), f^u(t) - \bar f (t)\rangle
+\langle \bar p(t), \sigma_2^u(t)(\bar h(t)- h^u(t))\rangle +\langle \bar k(t), z_2^u(t)(\bar h(t)- h^u(t))\rangle \bigg)dt
\bigg]
\nonumber
\\&&-\mathbb E^{\bar u} \bigg[\int_0^T \la { \cal H}_x (t, \bar x(t),
\bar y(t), \bar z(t),
 \bar u(t)), x^u (t) - \bar
x (t)\ra dt\bigg]
\nonumber
\\&&-\mathbb E^{\bar u} \bigg[\int_0^T \la { \cal H}_y (t, \bar x(t),
\bar y(t), \bar z(t),
 \bar u(t)), y^u (t) - \bar
y (t)\ra dt\bigg]
\nonumber
\\&&-\mathbb E^{\bar u} \bigg[\int_0^T \la { \cal H}_{z_1}(t, \bar x(t),
\bar y(t), \bar z(t),
 \bar u(t)), z_1^u (t) - \bar
z_1 (t)\ra dt\bigg]
\nonumber
\\&&-\mathbb E^{\bar u} \bigg[\int_0^T \la { \cal H}_{z_2}(t, \bar x(t),
\bar y(t), \bar z(t),
 \bar u(t)), z_2^u (t) - \bar
z_2 (t)\ra dt\bigg]
\nonumber
\\&&-{\mathbb E}
 \big [  \left <
y^u (0)
- \bar y (0), \bar \gamma_y (0)  \right
> \big ]
\bigg]
\end{eqnarray}

which implies that

\begin{eqnarray} \label{eq:29}
   &&\mathbb E^{\bar u}\bigg[\int_0^T \bigg (\langle \bar p(t), b^u (t) - \bar b (t)\rangle + \langle \bar q_1(t), \sigma_1^u (t) - \bar \sigma_1 (t)\rangle
+ \langle \bar q_2(t), \sigma _2^u(t) - {\bar \sigma}_2 (t)\rangle+ \langle \bar k(t), f^u(t) - \bar f (t)\rangle \bigg)dt\bigg]\nonumber
\\=&& \mathbb E^{\bar u} \big[ \la \bar \Phi_x( T), x^u(T)-\bar x(T)\ra\big]+\mathbb E^{\bar u} \big[ \la \bar k(T), \phi^u(T)-\bar \phi(T)-\bar\phi_x(T)(x^u(T)-\bar x(T))\ra\big]\nonumber
\\&&+ \mathbb E\bigg[\int_0^T\bigg(\langle \bar p(t), \sigma_2^u(t)(h^u(t)-\bar h(t))\rangle +\langle \bar k(t), z_2^u(t)(h^u(t)-\bar h(t))\rangle \bigg)dt\bigg]\nonumber
\\&&+\mathbb E^{\bar u} \bigg[\int_0^T \la { \cal H}_x (t, \bar x(t),
\bar y(t), \bar z(t),
 \bar u(t)), x^u (t) - \bar
x (t)\ra dt\bigg]\nonumber
\\&&+\mathbb E^{\bar u} \bigg[\int_0^T \la { \cal H}_y (t, \bar x(t),
\bar y(t), \bar z(t),
 \bar u(t)), y^u (t) - \bar
y (t)\ra dt\bigg]\nonumber
\\&&+\mathbb E^{\bar u} \bigg[\int_0^T \la { \cal H}_{z_1}(t, \bar x(t),
\bar y(t), \bar z(t),
 \bar u(t)), z_1^u (t) - \bar
z_1 (t)\ra dt\bigg] \nonumber
\\&&+\mathbb E^{\bar u} \bigg[\int_0^T \la { \cal H}_{z_2}(t, \bar x(t),
\bar y(t), \bar z(t),
 \bar u(t)), z_2^u (t) - \bar
z_2 (t)\ra dt\bigg]\nonumber
\\&&+{\mathbb E}
 \big [  \left <
y^u (0)
- \bar y (0), \bar \gamma_y (0)  \right
> \big ]
\bigg]
\end{eqnarray}

Applying It\^{o} formula to
$(\rho^u(t)-\bar \rho(t)) \bar r(t), $
we have
\begin{eqnarray}
  \begin{split}
    \mathbb E [ (\rho^u(T)-\bar\rho(T))\bar \Phi(T)]= &-\mathbb E\bigg[\int_0^T(\rho^u(t)-\bar\rho(t))(\bar l(t)+\bar R_2(t)\bar h(t))dt\bigg]
    +\mathbb E\bigg[\int_0^T\bar R_2(t)(\rho^u(t)h^u(t)-\bar\rho(t)\bar  h(t))dt\bigg],
  \end{split}
\end{eqnarray}

which implies that

\begin{eqnarray}\label{eq:31}
  \begin{split}
    \mathbb E  [(\rho^u(T)-\bar\rho(T))\bar \Phi(T)]+\mathbb E\bigg[\int_0^T(\rho^u(t)
    -\bar\rho(t))\bar l(t)dt\bigg]=
    \mathbb E\bigg[\int_0^T\bar R_2(t)\rho^u(t)(h^u(t)-\bar  h(t))dt\bigg]
  \end{split}
\end{eqnarray}

Putting \eqref{eq:29} into \eqref{eq:27}, we have
\begin{eqnarray}\label{eq:32}
  \begin{split}
   \mathbb E^{\bar u}
   \bigg[\int_0^T
    (l^u(t)-\bar l(t))dt\bigg]={\mathbb E} ^{\bar u}
\bigg[\int_0^T \bigg ( &{ \cal H}(t, x^u(t),
y^u(t), z^u_1(t), z^u_2(t), u(t)) - {\cal H} (t, \bar x(t),
\bar y(t), \bar z_1(t),\bar z_2(t),  \bar u(t))
\\& - \big < {\cal H}_x (t, \bar x(t),
\bar y(t), \bar z_1(t), \bar z_2(t),  \bar u(t)),x^u (t) - \bar
x (t)
\big>
\\&- \big <{ \cal H}_y (t, \bar x(t),
\bar y(t), \bar z_1(t), \bar z_2(t),  \bar u(t)),
y^u (t) - \bar
y (t)\big>
\\&- \big <{\cal H}_{z_1} (t, \bar x(t),
\bar y(t), \bar z_1(t),\bar z_2(t),  \bar u(t)),
 z^u_1 (t) - \bar
z_1 (t)\big>
\\&- \big <{\cal H}_{z_2} (t, \bar x(t),
\bar y(t), \bar z_1(t),\bar z_2(t),  \bar u(t)),
 z^u_2 (t) - \bar
z_2 (t)\big>
\\&-
 \langle (\sigma_2^u(t)-\bar{\sigma}_2(t))
 (h^u(t)-\bar
 h(t)), \bar p(t)\rangle \bigg)dt\bigg] \\
& - {\mathbb E}^u \big [   \left <
x^u (T) - \bar x (T), \bar \Phi_x (T)  \right
> \big ]- {\mathbb E}
 \big [  \left <
y^u (0)
- \bar y (0), \bar \gamma_y (0)  \right
> \big ]
\\& - {\mathbb E}^u
\big [ \la \phi^u (T) - \bar \phi (T), k^u(T)\ra - \left <
\bar \phi_x^*(T)\bar k(T),
x^u (T) - \bar x (T) \right
> \big ]
  \end{split}
  \end{eqnarray}

Then putting\eqref{eq:31} and
 \eqref{eq:32} into \eqref{eq:26} , we get \eqref{eq:21}. The proof is complete.
 \end{proof}

Since the control domain $U$ is convex, for any given admissible controls $u (\cdot) \in {\cal A}$,
the following perturbed control process $u^\epsilon (\cdot)$:
\begin{eqnarray*}
u^\epsilon (\cdot) := \bar u (\cdot) + \epsilon ( u (\cdot) - \bar u (\cdot) ) , \quad  0 \leq \epsilon \leq 1 ,
\end{eqnarray*}
is also in ${\cal A}$. We denote by $(\bar x (\cdot),
\bar y (\cdot),
\bar z_1 (\cdot),\bar z_2(\cdot))$ and
$(x^\epsilon (\cdot), y^\epsilon (\cdot), z^\epsilon_1 (\cdot), z^\eps_2(\cdot))$ the corresponding state processes
associated with $\bar u (\cdot)$ and $u^\epsilon (\cdot)$, respectively. Denote by $(\bar p(\cdot),\bar q_1(\cdot), \bar { q}_2(\cdot),\bar k(\cdot),\bar r(\cdot),\bar R_1(\cdot),\bar R_2(\cdot ) )$ the adjoint process
associated with the admissible pair $(\bar u (\cdot); \bar x (\cdot),
\bar y(\cdot), \bar z_1(\cdot),
 \bar z_2(\cdot))$.

\begin{lemma} \label{lem:3.5}
  Let Assumptions \ref{ass:1.1}
   and \ref{ass:1.2} be satisfied. Then  we have
\begin{eqnarray*}
&&{\mathbb E} \bigg [ \sup_{t\in \cal T} | x^\epsilon (t) - \bar
x (t) |^4  \bigg ]
+{\mathbb E} \bigg [ \sup_{t\in \cal T} | y^\epsilon (t) - \bar
y (t) |^4  \bigg ]
+{\mathbb E} \bigg [
\bigg(\int_0^T| z^\epsilon_1 (t) - \bar
z _1(t) |^2 dt\bigg)^{2} \bigg ]
+{\mathbb E} \bigg [
\bigg(\int_0^T| z^\epsilon_2 (t) - \bar
z _2(t) |^2 dt\bigg)^{2} \bigg ]= O (\epsilon^4) \ .
\end{eqnarray*}
and
\begin{eqnarray*}
&&{\mathbb E} \bigg [ \sup_{t\in \cal T} | \rho^\epsilon (t) - \bar
\rho (t) |^2  \bigg ]
= O (\epsilon^2) \ .
\end{eqnarray*}
\end{lemma}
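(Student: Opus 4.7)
The plan is to reduce everything to the a priori stability estimates already established in Lemma \ref{lem:3.3} and Lemma \ref{lem:3.4}, by exploiting the fact that the perturbation has the explicit form $u^\epsilon(\cdot)-\bar u(\cdot)=\epsilon\,(u(\cdot)-\bar u(\cdot))$.

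First I would verify that $u^\epsilon(\cdot)\in\mathcal A$. Convexity of $U$ together with the $\mathscr F^Y_t$-adaptedness of $u(\cdot)$ and $\bar u(\cdot)$ gives $\mathscr F^Y_t$-adaptedness and range in $U$; the $L^4$-in-time bound on $u^\epsilon(\cdot)$ follows from the elementary inequality $|a+b|^2\le 2|a|^2+2|b|^2$ applied inside the $L^2(0,T)$-norm and then taking expectation of the square. Therefore the pair $(x^\epsilon(\cdot),y^\epsilon(\cdot),z^\epsilon_1(\cdot),z^\epsilon_2(\cdot))$ and the density $\rho^\epsilon(\cdot)$ are well defined via \eqref{eq:4} and \eqref{eq:8}, and the solution estimates \eqref{eq:1.8}, \eqref{eq:1.9} apply uniformly in $\epsilon\in[0,1]$.

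Next, since both $u^\epsilon(\cdot)$ and $\bar u(\cdot)$ are admissible, Lemma \ref{lem:3.3} applied to this pair yields
\begin{equation*}
\mathbb E\Bigl[\sup_{t\in\mathcal T}|x^\epsilon(t)-\bar x(t)|^4\Bigr]
+\mathbb E\Bigl[\sup_{t\in\mathcal T}|y^\epsilon(t)-\bar y(t)|^4\Bigr]
+\mathbb E\Bigl[\Bigl(\int_0^T|z^\epsilon_1(t)-\bar z_1(t)|^2 dt\Bigr)^{2}\Bigr]
+\mathbb E\Bigl[\Bigl(\int_0^T|z^\epsilon_2(t)-\bar z_2(t)|^2 dt\Bigr)^{2}\Bigr]
\le K\,\mathbb E\Bigl[\int_0^T|u^\epsilon(t)-\bar u(t)|^2 dt\Bigr]^{2}.
\end{equation*}
Substituting $u^\epsilon(\cdot)-\bar u(\cdot)=\epsilon(u(\cdot)-\bar u(\cdot))$ pulls out a factor $\epsilon^4$, and the admissibility of $u(\cdot),\bar u(\cdot)$ makes the remaining expectation finite, which gives the $O(\epsilon^4)$ conclusion. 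Likewise, Lemma \ref{lem:3.4} gives
\begin{equation*}
\mathbb E\Bigl[\sup_{t\in\mathcal T}|\rho^\epsilon(t)-\bar\rho(t)|^2\Bigr]
\le K\Bigl\{\mathbb E\Bigl[\int_0^T|u^\epsilon(t)-\bar u(t)|^2 dt\Bigr]^{2}\Bigr\}^{\frac{1}{2}}
= K\,\epsilon^2\Bigl\{\mathbb E\Bigl[\int_0^T|u(t)-\bar u(t)|^2 dt\Bigr]^{2}\Bigr\}^{\frac{1}{2}},
\end{equation*}
which is $O(\epsilon^2)$ as claimed.

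There is no genuine obstacle in this proof beyond recognizing that the stability bounds in Lemmas \ref{lem:3.3} and \ref{lem:3.4} are scale-invariant in the control difference with the optimal exponents. The only thing one has to be careful about is that the $L^4$-bound on the admissible class is precisely what makes the right-hand side finite (so the improvement from $L^8$ in \cite{WWX} to $L^4$ here is exactly what allows this clean one-line deduction), and that $\mathcal A$ is stable under convex perturbation so the estimates of the previous lemmas legitimately apply to the pair $(u^\epsilon,\bar u)$.
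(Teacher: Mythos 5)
Your proposal is correct and follows exactly the paper's route: the paper's own proof is the one-line observation that the claim follows from Lemmas \ref{lem:3.3} and \ref{lem:3.4} applied to the pair $(u^\epsilon,\bar u)$, using $u^\epsilon-\bar u=\epsilon(u-\bar u)$ to extract the factors $\epsilon^4$ and $\epsilon^2$. You merely spell out the admissibility of $u^\epsilon(\cdot)$ and the substitution step, which the paper leaves implicit.
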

\begin{proof}
  The proof can be obtained
  directly by Lemmas \ref{lem:3.3}
  and \ref{lem:3.4}.
\end{proof}

Now we are in the position to use  Lemma \ref{lem4} and Lemma \ref{lem:3.5} to derive the variational formula for the cost functional
$J(u(\cdot))$ in terms of the Hamiltonian ${\cal H}$.

\begin{theorem}\label{them:3.1}
 Let Assumptions \ref{ass:1.1} and
 \ref{ass:1.2} be
 satisfied.
Then for any admissible control $u (\cdot) \in {\cal A}$, the directional derivative of the cost functional $J (u (\cdot))$ at $\bar u (\cdot)$  in the direction $u (\cdot) - \bar u(\cdot)$
exists and is given by
\begin{eqnarray}\label{eq:4.4}
&& \frac{d}{d\epsilon} J ( \bar u (\cdot) + \epsilon ( u (\cdot) - \bar u (\cdot) ) ) |_{\epsilon=0} \nonumber \\
&& := \lim_{\epsilon \rightarrow 0^+}
\frac{ J ( \bar u (\cdot) + \epsilon ( u (\cdot) - \bar u (\cdot) ) )
- J( \bar u (\cdot) ) }{\epsilon} \nonumber \\
&& = {\mathbb E}^{\bar u} \bigg [ \int_0^T \left < {\cal H}_u (t, \bar x(t),
\bar y(t), \bar z_1(t),\bar z_2(t),  \bar u(t)),
u (t) -
 \bar u (t) \right > d t \bigg ] .
\end{eqnarray}
\end{theorem}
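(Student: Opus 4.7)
The strategy is to apply the difference formula of Lemma \ref{lem4} with $u(\cdot)$ replaced by the perturbation $u^\eps(\cdot)=\bar u(\cdot)+\eps(u(\cdot)-\bar u(\cdot))$, divide by $\eps$, and pass to the limit $\eps\to 0^+$ using the order estimates collected in Lemma \ref{lem:3.5} together with Assumptions \ref{ass:1.1}--\ref{ass:1.2}. Lemma \ref{lem4} already isolates the Hamiltonian contribution in a convenient form, so essentially every other term on the right--hand side of \eqref{eq:21} will be shown to be $o(\eps)$.

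First, for the main ${\cal H}$--term I would write, for each fixed $(t,\omega)$, the exact first--order Taylor expansion
\[
{\cal H}(t,x^\eps,y^\eps,z_1^\eps,z_2^\eps,u^\eps)-{\cal H}(t,\bar x,\bar y,\bar z_1,\bar z_2,\bar u)=\int_0^1 \la \nabla{\cal H}(\theta),(x^\eps-\bar x,y^\eps-\bar y,z_1^\eps-\bar z_1,z_2^\eps-\bar z_2,u^\eps-\bar u)\ra\, d\theta,
\]
where $\nabla{\cal H}(\theta)$ denotes the gradient evaluated at the convex combination with parameter $\theta$. Subtracting the linear terms present in \eqref{eq:21} then produces (after dividing by $\eps$) two kinds of contributions: (a) a term of the form $\int_0^1 \la {\cal H}_u(\theta),u-\bar u\ra d\theta$ which converges, by continuity of ${\cal H}_u$ in its spatial arguments and the estimate $\sup_t|x^\eps-\bar x|+\sup_t|y^\eps-\bar y|+\|z_i^\eps-\bar z_i\|=O(\eps)$ from Lemma \ref{lem:3.5}, to $\la {\cal H}_u(t,\bar x,\bar y,\bar z_1,\bar z_2,\bar u),u-\bar u\ra$; and (b) correction terms of the form $\int_0^1\la {\cal H}_a(\theta)-{\cal H}_a(0),\frac{1}{\eps}(a^\eps-\bar a)\ra d\theta$ for $a=x,y,z_1,z_2$, which are $o(1)$ in $L^1(d\mathbb P^{\bar u}\otimes dt)$ because the first factor tends to $0$ in measure while the second is bounded in $L^2$ uniformly in $\eps$ (again by Lemma \ref{lem:3.5}).

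Next, I treat the remaining terms of \eqref{eq:21} as negligible after division by $\eps$. The two quadratic cross terms $\la(\sigma_2^u-\bar\sigma_2)(h^u-\bar h),\bar p\ra$ and $\la(z_2^u-\bar z_2)(h^u-\bar h),\bar k\ra$ are products of two factors each $O(\eps)$ (by Lipschitz continuity of $\sigma_2$ and $h$ together with Lemma \ref{lem:3.5}, and by the $L^2$--estimate on $z_2^\eps-\bar z_2$), so after Cauchy--Schwarz against $\bar p,\bar k\in S^4$ they are $O(\eps^2)$. The three boundary remainders involving $\Phi,\phi,\gamma$ are second--order Taylor remainders controlled by $|x^\eps(T)-\bar x(T)|^2$ or $|y^\eps(0)-\bar y(0)|^2$ via the boundedness of the second--order increments that follow from the $C^1$ growth bounds in Assumption \ref{ass:1.2}, hence also $O(\eps^2)$ after taking expectation. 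Finally, for the three terms involving $\rho^u-\bar\rho$, each such difference is $O(\sqrt\eps)$ in $L^2$ a priori from \eqref{eq:1.15}; but combined with another factor of order $\eps$ (namely $h^u-\bar h$, $l^u-\bar l$ or $\Phi^u(T)-\bar\Phi(T)$) and using $\bar R_2\in M^2$, $\bar\Phi,\bar l$ with the growth bounds, a Cauchy--Schwarz argument gives $o(\eps)$.

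Collecting everything, dividing \eqref{eq:21} by $\eps$ and sending $\eps\to 0^+$ by the dominated convergence theorem (domination being supplied by the $L^p$--estimates of Lemmas \ref{lem:3.3}--\ref{lem:3.5} and the linear growth of ${\cal H}_u$ in $(x,y,z_1,z_2,u)$) yields the desired formula \eqref{eq:4.4}. The most delicate point I foresee is justifying dominated convergence for the Taylor correction terms (b) above: one must verify that the $L^1(\mathbb P^{\bar u}\otimes dt)$--domination survives the change of measure to $\mathbb P^{\bar u}$, which reduces to checking that $\rho^{\bar u}\cdot {\cal H}_a(\theta)\cdot \frac{1}{\eps}(a^\eps-\bar a)$ is uniformly integrable; this is exactly where the improved $L^4$ bound on the admissible controls (rather than $L^8$) is used, through the matching $L^4$--estimates on the state processes in Lemma \ref{lem:3.3} and the $L^\alpha$--estimate on $\rho^{\bar u}$ in Lemma \ref{lem:3.4}.
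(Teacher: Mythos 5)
Your proposal follows essentially the same route as the paper: apply Lemma \ref{lem4} with the perturbation $u^\eps(\cdot)$, isolate the term $\eps\,{\mathbb E}^{\bar u}\big[\int_0^T\la {\cal H}_u,u-\bar u\ra dt\big]$, and show all remaining terms are $o(\eps)$ via Taylor expansion, the order estimates of Lemma \ref{lem:3.5}, and dominated convergence. Your reading of the $\rho$-difference as only $O(\sqrt{\eps})$ in $L^2$ is more conservative than the $O(\eps)$ that Lemma \ref{lem:3.5} actually provides, but it still yields $o(\eps)$, so the argument is sound and matches the paper's proof in substance, merely spelling out details the paper compresses into one line.
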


\begin{proof}
For notational simplicity, write
\begin{eqnarray}
\beta^\epsilon &:=& {\mathbb E} ^{\bar u}
\bigg[\int_0^T \bigg [ { \cal H}(t, x^{u^\eps}(t),
y^{u^\eps}(t), z_1^{u^\eps}(t),z_2^{u^\eps}(t), u(t)) - {\cal H} (t, \bar x(t),
\bar y(t), \bar z_1(t),\bar z_2(t),  \bar u(t)) \nonumber \\&& - \big < {\cal H}_x (t, \bar x(t),
\bar y(t), \bar z_1(t),\bar z_2(t),  \bar u(t)),x^{u^\eps} (t) - \bar
x (t)
\big>- \big <{ \cal H}_y (t, \bar x(t),
\bar y(t), \bar z_1(t),
\bar z_2(t),  \bar u(t)),
y^{u^\eps} (t) - \bar
y (t)\big>\nonumber
\\&&- \big <{\cal H}_{z_1} (t, \bar x(t),
\bar y(t), \bar z_1(t),
\bar z_2(t), \bar u(t)),
 z_1^{u^\eps} (t) - \bar
z_1 (t)\big>- \big <{\cal H}_{z_2} (t, \bar x(t),
\bar y(t), \bar z_1(t),\bar z_2(t),  \bar u(t)),
 z_2^{u^\eps} (t) - \bar
z_2(t)\big>\nonumber
\\&& - \big <{\cal H}_{u} (t, \bar x(t),
\bar y(t), \bar z_1(t),\bar z_2(t),  \bar u(t)),
 u^\eps (t) - \bar
u(t)\big>-
 \langle (\sigma_2^{u^\eps}(t)-\bar{\sigma}_2(t))
 (h^{u^\eps}(t)-\bar
 h(t)), \bar p(t)\rangle \nonumber
 \\&& -
 \langle (z_2^{u^\eps}(t)-\bar{z}_2(t))
 (h^{u^\eps}(t)-\bar
 h(t)), \bar k(t)\rangle dt\bigg]
\nonumber \\
&& + {\mathbb E}^{\bar u} \big [ \Phi^{u^\eps}(T)
 - \bar \Phi (T) - \left <
x^{u^\eps} (T) - \bar x (T), \bar \Phi_x (T)  \right
> \big ]
\nonumber \\
&& - {\mathbb E}^{\bar u}
\big [ \la \phi^{u^\eps} (T) - \bar \phi (T), \bar k(T)\ra - \left <
\bar \phi_x^*(T) \bar k(T),
x^{u^\eps} (T) - \bar x (T) \right
> \big ]
\nonumber \\
&& + {\mathbb E}
 \big [ \gamma^{u^\eps}(0) - \bar \gamma (0) - \left <
y^{u^\eps} (0)
- \bar y (0), \bar \gamma_y (0)  \right
> \big ]
\nonumber\nonumber
\\&&+\mathbb E\bigg[\int_0^T \bar R_2(t)(\rho^{u^\eps}(t)-\rho^{\bar u}(t))(h^{u^\eps}(t)-\bar
 h(t))dt\bigg] \nonumber
\\&&+\mathbb E\bigg[\int_0^T(l^{u^\eps}(t)-\bar l(t))( \rho^{u^\eps}(t)-\bar
\rho(t))dt\bigg]\nonumber
\\&&
+\mathbb E \bigg[ ( \rho^{u^\eps} (T) - \bar \rho(T)) (\Phi^{u^\eps} (T) -
\bar \Phi (T))\bigg]
\end{eqnarray}
By Lemma \ref{lem4}, we have
\begin{eqnarray}\label{eq:4.13}
J ( u^\epsilon (\cdot) ) - J ( \bar u (\cdot) )
= \beta^\epsilon + \epsilon {\mathbb E}^
{\bar u} \bigg [ \int_0^T \left < {\cal H}_u(t, \bar x(t),
\bar y(t), \bar z_1(t),\bar z_2(t),  \bar u(t)), u (t) - \bar u (t) \right > d t \bigg ] .
\end{eqnarray}
Under Assumptions \ref{ass:1.1} and
\ref{ass:1.2}, combining the Taylor Expansions, Lemma \ref{lem:3.5}, and the dominated
convergence theorem, we have
\begin{eqnarray}\label{eq:4.121}
\beta^\epsilon = o(\epsilon) .
\end{eqnarray}
Plugging \eqref{eq:4.121} into \eqref{eq:4.13} gives
\begin{eqnarray*}
\lim_{\epsilon \rightarrow 0^+} \frac{J (u^\epsilon (\cdot) ) - J (\bar u (\cdot))}{\epsilon}
= {\mathbb E}^{\bar u} \bigg [ \int_0^T \left < {\cal H}_u (t, \bar x(t),
\bar y(t), \bar z_1(t),\bar z_2(t),  \bar u(t)),
u (t) - \bar u (t) \right > d t \bigg ] .
\end{eqnarray*}
This completes the proof.
\end{proof}

Now we derive the necessary condition and sufficient maximum principles for Problem \ref{pro:1.1} or \ref{pro:1.2}.
We first give the necessary condition of optimality for the existence of an optimal control.

\begin{theorem}[{\bf Necessary Stochastic Maximum principle}]
 Let Assumptions \ref{ass:1.1}
  and \ref{ass:1.2} be satisfied. Let $( \bar u (\cdot); \bar x (\cdot), \bar y(\cdot), \bar z_1(\cdot),
  \\\bar z_2(\cdot),\bar\rho(\cdot) )$ be
an optimal pair of  Problem \ref{pro:1.2}. Then
\begin{eqnarray}\label{eq:4.15}
\left <  {\mathbb E}^{\bar u} [ {\cal H}_{u} (t, \bar x(t),
\bar y(t), \bar z_1(t),\bar z_2(t),  \bar u(t))  | {\mathscr F}_t ^Y ],
v - \bar u (t) \right > \geq 0 , \quad \forall v \in U ,\ a.e.\ a.s..
\end{eqnarray}
\end{theorem}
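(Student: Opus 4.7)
The plan is to derive the pointwise necessary condition \eqref{eq:4.15} from the integral variational formula of Theorem~\ref{them:3.1} by means of a needle-type variation compatible with the $\mathscr F^Y$-adaptedness constraint, combined with the tower property of conditional expectation under $\mathbb P^{\bar u}$ and the Lebesgue differentiation theorem.

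First, I would fix an arbitrary $v\in U$, a time $t\in[0,T)$, a small $h>0$ with $t+h\leq T$, and an arbitrary set $A\in\mathscr F^Y_t$, and define the perturbed control
$$u(s):=\bar u(s)+(v-\bar u(s))\mathbf 1_A\mathbf 1_{[t,t+h]}(s),\quad s\in[0,T].$$
Since $A\in\mathscr F^Y_t\subset\mathscr F^Y_s$ for $s\geq t$, this $u(\cdot)$ is $\mathscr F^Y_s$-adapted; at each $(s,\omega)$ it coincides with either $\bar u(s)\in U$ or $v\in U$, so it takes values in the convex set $U$; and the $L^4$-type admissibility bound $\mathbb E\big[(\int_0^T|u(s)|^2\,ds)^2\big]<\infty$ follows from that of $\bar u$ since the perturbation is bounded and supported on a subinterval of length $h$. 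Hence $u(\cdot)\in\mathcal A$, so $u^\epsilon:=\bar u+\epsilon(u-\bar u)\in\mathcal A$ for every $\epsilon\in[0,1]$. The optimality of $\bar u$ together with Theorem~\ref{them:3.1} yields
$$0\leq \mathbb E^{\bar u}\bigg[\int_t^{t+h}\la \cH_u(s,\bar x(s),\bar y(s),\bar z_1(s),\bar z_2(s),\bar u(s)),\,v-\bar u(s)\ra \mathbf 1_A\,ds\bigg].$$

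Next, because $\mathbf 1_A(v-\bar u(s))$ is $\mathscr F^Y_s$-measurable for $s\geq t$, the tower property under $\mathbb P^{\bar u}$ lets me replace $\cH_u$ by its $\mathscr F^Y_s$-conditional expectation inside the integrand without changing the value. Dividing the resulting inequality by $h$ and letting $h\downarrow 0$, and invoking the Lebesgue differentiation theorem applied to the function $s\mapsto \mathbb E^{\bar u}\big[\la \mathbb E^{\bar u}[\cH_u(s,\cdot)\mid\mathscr F^Y_s],v-\bar u(s)\ra\mathbf 1_A\big]$, I obtain, for a.e. $t\in[0,T]$,
$$\mathbb E^{\bar u}\Big[\la \mathbb E^{\bar u}[\cH_u(t,\bar x(t),\bar y(t),\bar z_1(t),\bar z_2(t),\bar u(t))\mid\mathscr F^Y_t],\,v-\bar u(t)\ra \mathbf 1_A\Big]\geq 0.$$
Arbitrariness of $A\in\mathscr F^Y_t$, combined with the $\mathscr F^Y_t$-measurability of the bracketed inner product, forces this inner product itself to be nonnegative $\mathbb P^{\bar u}$-a.s., which is \eqref{eq:4.15}; the statement transfers from $\mathbb P^{\bar u}$-a.s. to $\mathbb P$-a.s. by equivalence of the two measures (and to all $v\in U$ by separability of $U\subset\mathbb R^k$ and continuity of the inner product in $v$).

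The main obstacle I anticipate is not algebraic but structural: verifying that the needle variation $u(\cdot)$ indeed belongs to the weaker $L^4$-admissible class $\mathcal A$ used in this paper (as opposed to the $L^8$-class of \cite{WWX}), so that Theorem~\ref{them:3.1} and in particular the remainder estimate $\beta^\epsilon=o(\epsilon)$ apply to this specific family of perturbations. Once this integrability is secured, the interchange of conditional expectation with the time integral and the Lebesgue differentiation step are routine.
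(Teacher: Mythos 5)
Your argument is sound in outline and takes a genuinely different route from the paper. The paper does not localize in time at all: it takes an arbitrary admissible direction $u(\cdot)-\bar u(\cdot)$, writes the variational inequality of Theorem~\ref{them:3.1} under $\mathbb P$ as $\mathbb E\big[\int_0^T\langle \bar\rho(t)\mathcal H_u(t,\cdot),u(t)-\bar u(t)\rangle dt\big]\geq 0$, uses $\mathscr F^Y_t$-measurability of $u(t)-\bar u(t)$ to insert $\mathbb E[\bar\rho(t)\mathcal H_u(t,\cdot)\,|\,\mathscr F^Y_t]$, deduces the pointwise inequality for this $\mathbb P$-conditioned quantity, and only then converts to the stated $\mathbb E^{\bar u}[\,\cdot\,|\,\mathscr F^Y_t]$ form by Bayes' formula, dividing by $\mathbb E[\bar\rho(t)|\mathscr F^Y_t]>0$. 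You instead condition directly under $\mathbb P^{\bar u}$ (legitimate, since the tower property applies there just as well and the perturbation direction is $\mathscr F^Y$-adapted), which buys you a cleaner finish: no explicit handling of $\bar\rho$ and no Kallianpur--Striebel step. The price is your needle variation plus Lebesgue differentiation, which is where the only fragile point sits.

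Concretely: for each fixed pair $(A,v)$ the Lebesgue differentiation theorem gives the inequality only for $t$ outside a null set $N_{A,v}$, while your final step needs, for a.e.\ fixed $t$, the inequality for \emph{all} $A\in\mathscr F^Y_t$ (an uncountable family that moreover varies with $t$). You address the $v$-quantifier by separability of $U$ but not the $A$-quantifier. This is repairable --- e.g.\ run the argument over a countable algebra of cylinder sets generated by $\{Y(r):r\le q\}$ for rational $q\le t$, then extend to $\mathscr F^Y_t$ by a monotone-class argument using that $A\mapsto \mathbb E^{\bar u}[\langle \mathbb E^{\bar u}[\mathcal H_u(t,\cdot)|\mathscr F^Y_t],v-\bar u(t)\rangle\mathbf 1_A]$ is a finite signed measure --- but as written the quantifier order is a gap. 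A shorter repair, and essentially what the paper's ``which implies'' tacitly invokes, is to skip differentiation in time altogether: for fixed $v$ test the integral inequality with the admissible control equal to $v$ on the $\mathscr F^Y$-predictable set $\{(t,\omega):\langle \mathbb E^{\bar u}[\mathcal H_u(t,\cdot)|\mathscr F^Y_t],v-\bar u(t)\rangle<0\}$ and equal to $\bar u$ elsewhere; this forces that set to be $dt\otimes d\mathbb P$-null, and separability in $v$ then finishes the proof. Also note a small imprecision: the perturbation $(v-\bar u(s))\mathbf 1_A\mathbf 1_{[t,t+h]}(s)$ is not bounded unless $\bar u$ is, but admissibility of your $u$ is still immediate since $|u|\le|\bar u|+|v|$.
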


\begin{proof}
Since all admissible controls
are $\{\mathscr F^Y_t\}_{t\in \cal T}$-adapted
processes, from the property of conditional
expectation, Theorem \ref{them:3.1} and the optimality of $\bar u(\cdot)$,  we
deduce that
\begin{eqnarray*}
&& {\mathbb E} \bigg [ \int_0^T
 \langle {\mathbb E}
  [{\bar \rho(t)}{ \bar {\cal H}}_u (t, \bar x(t),
\bar y(t), \bar z_1(t),\bar z_2(t),  \bar u(t)) | {\mathscr F}_t ^Y] ,
u (t) - \bar u (t) \rangle  d t \bigg ] \\
&& = {\mathbb E} \bigg[\int_0^T \langle
{\bar \rho(t)}{ \bar  {\cal H}}_u (t, \bar x(t),
\bar y(t), \bar z_1(t),\bar z_2(t),  \bar u(t))
, u (t) - \bar u (t) \rangle \d t \bigg ]
\\
&& = {\mathbb E}^{\bar u} \bigg[\int_0^T \langle{ \bar {\cal H}}_u (t, \bar x(t),
\bar y(t), \bar z_1(t),\bar z_2(t),  \bar u(t))
, u (t) - \bar u (t) \rangle \d t \bigg ] \\
&& = \lim_{\epsilon \rightarrow 0^+} \frac{J( \bar u (\cdot) + \epsilon (
u (\cdot) - \bar u (\cdot) ) ) - J ( \bar u (\cdot) )}{\epsilon} \geq 0 ,
\end{eqnarray*}
which implies that
\begin{eqnarray}
  \langle {\mathbb E}
  [{\bar \rho(t)}{ \bar {\cal H}}_u ( t, \bar x(t),
\bar y(t), \bar z_1(t),\bar z_2(t),  \bar u(t)) | {\mathscr F}_t ^Y] ,
v - \bar u (t) \rangle \geq 0, \quad \forall v \in U ,\ a.e.\ a.s..
\end{eqnarray}
On the other hand, since
$\bar \rho(t)>  0, $
 \begin{eqnarray}
   \begin{split}
    & \left <  {\mathbb E}^{\bar u} [ {\cal H}_{u} (t, \bar x(t),
\bar y(t), \bar z_1(t),\bar z_2(t),  \bar u(t))  | {\mathscr F}_t ^Y ],
v - \bar u (t) \right >
\\&=
\frac{1}{\mathbb E[\bar\rho (t)|{\mathscr F}_t ^Y]}
\big\langle {\mathbb E}
  [{\bar \rho(t)}{ \bar {\cal H}}_u (t, \bar x(t),
\bar y(t), \bar z_1(t),\bar z_2(t),  \bar u(t)) | {\mathscr F}_t ^Y] ,
v - \bar u (t)\big \rangle
\\& \geq 0.
   \end{split}
 \end{eqnarray}
  The proof is complete.

\end{proof}

Next we give the sufficient condition of optimality for the existence of an optimal control of
Problem \ref{pro:1.2} in the case when
the observation process is not
affected by the control process.
Suppose that
$$h(t,x,u)=h(t)$$ is an
$\mathscr F^Y_t-$ adapted bounded
process.
  Define a new probability measure $\mathbb Q$ on $(\Omega, \mathscr F)$ by
\begin{eqnarray}
  d\mathbb Q=\rho(1)d\mathbb P,
\end{eqnarray}
where
\begin{eqnarray} \label{eq:43}
  \left\{
\begin{aligned}
  d \rho(t)=&  \rho(t) h(s)dY(s)\\
  \rho(0)=&1.
\end{aligned}
\right.
\end{eqnarray}

\begin{theorem}{\bf [Sufficient Maximum Principle] } \label{thm:4.1}
 Let Assumptions \ref{ass:1.1}
  and \ref{ass:1.2} be satisfied. Let $(\bar u (\cdot);
  \bar x (\cdot),
  \bar y(\cdot), \bar z_1(\cdot),\bar z_2 (\cdot))$ be an admissible pair with $\phi(x)=\phi x,$
  where $\phi$ is $\mathscr F_T-$measurable bounded  random variable.
If the following conditions are satisfied,
\begin{enumerate}
\item[(i)]  $\Phi$ and $\gamma$ is convex in $x$ and $y,$ respectively,
\item[(ii)] the Hamiltonian ${\cal H}$ is convex in $(x, y, z_1, z_2,  u)$,
\item[(iii)]
\begin{eqnarray*}\label{eq:5.119}
&& \mathbb E\bigg[{\cal H} ( t,
\bar x (t),
\bar y (t), \bar z_1 (t),
\bar z_2 (t),\bar u(t)) |\mathscr F^Y_t\bigg]\nonumber \\
&& = \min_{u \in U }
 \mathbb E\bigg[{\cal H} ( t,
\bar x (t),
\bar y (t), \bar z_1 (t),\bar z_2 (t), u)
 |\mathscr F^Y_t\bigg], \quad \mbox {a.e.\ a.s.} ,
\end{eqnarray*}
\end{enumerate}
then $(\bar u (\cdot),  \bar x (\cdot),
\bar y(\cdot), \bar z_1(\cdot), \bar z_2 (\cdot))$ is an optimal pair of Problem \ref{pro:1.2}.
\end{theorem}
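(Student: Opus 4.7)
The plan is to substitute the two structural simplifications into the variational identity of Lemma \ref{lem4}, so that the ``bad'' cross-terms vanish, and then combine the convexity hypotheses (i)--(ii) with the conditional-minimum condition (iii).

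First I would collapse \eqref{eq:21}. Since $h(t,x,u)=h(t)$ does not depend on $(x,u)$, every admissible control produces the same observation noise and the same likelihood: $h^u(t)=\bar h(t)$ and, by \eqref{eq:8}, $\rho^u(t)=\bar\rho(t)=\rho(t)$. Hence all the terms in \eqref{eq:21} carrying a factor of $h^u-\bar h$ or of $\rho^u-\bar\rho$ vanish identically. Since $\phi(x)=\phi x$ is linear in $x$, one has $\phi^u(T)-\bar\phi(T)=\phi\bigl(x^u(T)-\bar x(T)\bigr)$ and $\bar\phi_x^{*}(T)=\phi^{*}$, so the boundary bracket $\langle\phi^u(T)-\bar\phi(T),\bar k(T)\rangle-\langle\bar\phi_x^{*}(T)\bar k(T),x^u(T)-\bar x(T)\rangle$ also vanishes. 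What remains of \eqref{eq:21} is
\begin{align*}
J(u(\cdot))-J(\bar u(\cdot))
={}& \mathbb{E}^{\bar u}\int_0^T\Big(\mathcal{H}^u-\bar{\mathcal{H}}-\langle\bar{\mathcal{H}}_x,x^u-\bar x\rangle-\langle\bar{\mathcal{H}}_y,y^u-\bar y\rangle \\
& \quad -\langle\bar{\mathcal{H}}_{z_1},z_1^u-\bar z_1\rangle-\langle\bar{\mathcal{H}}_{z_2},z_2^u-\bar z_2\rangle\Big)\,dt \\
& +\mathbb{E}^{\bar u}\bigl[\Phi^u(T)-\bar\Phi(T)-\langle x^u(T)-\bar x(T),\bar\Phi_x(T)\rangle\bigr] \\
& +\mathbb{E}\bigl[\gamma^u(0)-\bar\gamma(0)-\langle y^u(0)-\bar y(0),\bar\gamma_y(0)\rangle\bigr].
\end{align*}

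Next I would apply the convexity hypotheses. By (i), the $\Phi$- and $\gamma$-brackets are tangent remainders of differentiable convex functions, hence non-negative. By (ii), and because the adjoint slot in \eqref{eq:19} is frozen at its $\bar{\cdot}$-values while we vary $(x,y,z_1,z_2,u)$, convexity of $\mathcal{H}$ yields pointwise $\mathcal{H}^u-\bar{\mathcal{H}}-\sum_{a\in\{x,y,z_1,z_2\}}\langle\bar{\mathcal{H}}_a,a^u-\bar a\rangle\geq\langle\bar{\mathcal{H}}_u,u-\bar u\rangle$. Combining,
\[
J(u(\cdot))-J(\bar u(\cdot))\ \geq\ \mathbb{E}^{\bar u}\int_0^T\langle\bar{\mathcal{H}}_u(t),u(t)-\bar u(t)\rangle\,dt.
\]

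Finally I would use (iii). For any fixed $v\in U$, convexity of $U$ gives $\bar u(t)+\epsilon(v-\bar u(t))\in U$ for $\epsilon\in(0,1]$, so (iii), convexity of $\mathcal{H}$ in $u$ and dominated convergence (Assumption \ref{ass:1.2}) yield
\[
\langle\mathbb{E}[\bar{\mathcal{H}}_u(t)\mid\mathscr{F}^Y_t],\,v-\bar u(t)\rangle\ \geq\ 0,\qquad\forall v\in U,\text{ a.e.\ a.s.}
\]
A countable-dense-subset plus continuity argument extends this to every $\mathscr{F}^Y_t$-measurable $U$-valued selector, in particular to $v(\omega)=u(t,\omega)$ for our admissible $u(\cdot)$. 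Because $h$ is $\mathscr{F}^Y_t$-adapted, $\bar\rho$ is also $\mathscr{F}^Y_t$-adapted, and the martingale property of $\bar\rho$ together with conditioning on $\mathscr{F}^Y_t$ gives
\[
\mathbb{E}^{\bar u}\int_0^T\langle\bar{\mathcal{H}}_u,u-\bar u\rangle\,dt=\mathbb{E}\int_0^T\bar\rho(t)\langle\mathbb{E}[\bar{\mathcal{H}}_u\mid\mathscr{F}^Y_t],u(t)-\bar u(t)\rangle\,dt\ \geq\ 0,
\]
which combined with the previous inequality shows $J(u(\cdot))\geq J(\bar u(\cdot))$.

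The main obstacle I expect is the bookkeeping in the first step: without both hypotheses $h_u\equiv 0$ and $\phi$ linear, \eqref{eq:21} carries cross-products such as $(\sigma_2^u-\bar\sigma_2)(h^u-\bar h)$ or $(\rho^u(T)-\bar\rho(T))(\Phi^u(T)-\bar\Phi(T))$ whose signs are not controlled by convexity of $\mathcal{H}$, $\Phi$, or $\gamma$, so the structural restrictions in the statement are essential rather than cosmetic. A secondary care point is the extension of the pointwise first-order inequality from constants $v\in U$ to $\mathscr{F}^Y_t$-measurable controls $u(t,\omega)$, but this is standard once $U$ has a countable dense subset.
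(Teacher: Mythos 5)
Your proposal is correct and follows essentially the same route as the paper: collapse the variational identity of Lemma \ref{lem4} using $h^u=\bar h$, $\rho^u=\bar\rho$ and the linearity of $\phi$, bound the remaining terms via convexity of $\mathcal{H}$, $\Phi$, $\gamma$, and convert condition (iii) into the first-order inequality $\langle\mathbb{E}[\bar{\mathcal{H}}_u\mid\mathscr{F}^Y_t],u(t)-\bar u(t)\rangle\geq 0$ before undoing the measure change with $\bar\rho$. The only difference is cosmetic: the paper invokes the convex optimization principle (Ekeland--T\'emam, Prop.\ 2.21) where you re-derive the first-order condition by perturbation and a density argument, and you spell out the vanishing of the cross terms that the paper suppresses silently.
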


\begin{proof}
Let $({u}(\cdot);x^u(\cdot),
 y^u(\cdot),
 z^u_1(\cdot),  z^u_2(\cdot),
\rho^u(\cdot))$  be an arbitrary admissible pair. By Lemma \ref{lem4},
we can represent the difference $J ( u (\cdot) ) - J ( \bar u (\cdot) )$ as follows
\begin{eqnarray}\label{eq:40}
&&J (u (\cdot)) - J (\bar u(\cdot))\nonumber
\\ &=& {\mathbb E} ^Q
\bigg[\int_0^T \bigg [ { \cal H}(t, x^u(t),
y^u(t), z^u(t), u(t)) - {\cal H} (t, \bar x(t),
\bar y(t), \bar z(t),  \bar u(t)) \nonumber \\&& - \big < {\cal H}_x (t, \bar x(t),
\bar y(t), \bar z(t),  \bar u(t)),x^u (t) - \bar
x (t)
\big>- \big <{ \cal H}_y (t, \bar x(t),
\bar y(t), \bar z(t),  \bar u(t)),
y^u (t) - \bar
y (t)\big>\nonumber
\\&&- \big <{\cal H}_z (t, \bar x(t),
\bar y(t), \bar z(t),  \bar u(t)),
 z^u (t) - \bar
z (t)\big>
 dt\bigg]
\nonumber \\
&& + {\mathbb E}^{\bar u} \big [ \Phi(T)
 - \bar \Phi^u (T) - \left <
x^u (T) - \bar x (T), \bar \Phi_x (T)  \right
> \big ]
\nonumber \\
&& + {\mathbb E}
 \big [ \gamma^u(0) - \bar \gamma (0) - \left <
y^u (0)
- \bar y (0), \bar \gamma_y (0)  \right
> \big ].
\end{eqnarray}

By the convexity of ${\cal H}$, $\Phi$
and $\gamma$ (i.e. Conditions (i) and (ii)), we have
\begin{eqnarray}\label{eq:41}
 &&{ \cal H}(t, x^u(t),
y^u(t), z^u_1(t), z^u_2(t), u(t)) - {\cal H} (t, \bar x(t),
\bar y(t), \bar z_1(t),\bar z_2(t),  \bar u(t))\nonumber
\\ &\geq&  \big < {\cal H}_x (t, \bar x(t),
\bar y(t), \bar z_1(t), \bar z_2(t),  \bar u(t)),x^u (t) - \bar
x (t)
\big>\nonumber
+ \big <{ \cal H}_y (t, \bar x(t),
\bar y(t), \bar z_1(t), \bar z_2(t),  \bar u(t)),
y^u (t) - \bar
y (t)\big>\nonumber
\\&&+ \big <{\cal H}_{z_1} (t, \bar x(t),
\bar y(t), \bar z_1(t),\bar z_2(t),  \bar u(t)),
 z^u_1 (t) - \bar
z_1 (t)\big>\nonumber
+ \big <{\cal H}_{z_2} (t, \bar x(t),
\bar y(t), \bar z_1(t),\bar z_2(t),  \bar u(t)),
 z^u_2 (t) - \bar
z_2 (t)\big>
\\&&+ \big <{\cal H}_{u} (t, \bar x(t),
\bar y(t), \bar z_1(t),\bar z_2(t),  \bar u(t)),u (t) - \bar
u(t)\big>,
\end{eqnarray}

\begin{eqnarray}\label{eq:42}
 \Phi^u(T) - \bar \Phi (T) \geq \left < X^u (T) - \bar X (T), \Phi_x (T) \right >
\end{eqnarray}
and
\begin{eqnarray}\label{eq:43}
 \gamma^u(0) - \bar \gamma (0) \geq \left < y^u (0) - \bar y (0), \gamma_y (0) \right >.
\end{eqnarray}

Furthermore, from the optimality condition (iii)
and the convex optimization principle (see Proposition 2.21 of \cite{ET1976}), we have
\begin{eqnarray}\label{eq:5.5}
\left < u (t) - \bar u (t), \mathbb E\bigg[{\cal H}_{u} ( t,
\bar x (t),
\bar y (t), \bar z_1 (t),
\bar z_2(t),\bar u(t)) |\mathscr F^Y_t\bigg] \right >
\geq 0 .
\end{eqnarray}
which imply that

\begin{eqnarray}\label{eq:45}
\mathbb E^Q\bigg[\left < u (t) - \bar u (t), {\cal H}_{u} ( t,
\bar x (t),
\bar y (t), \bar z_1(t),\bar z_2(\cdot),\bar u(t)) \right >
\bigg]\geq 0.
\end{eqnarray}

Putting \eqref{eq:41},\eqref{eq:42},\eqref{eq:43} and  \eqref{eq:45} into \eqref{eq:40},
we have
\begin{eqnarray}
J (u (\cdot)) - J (\bar u (\cdot)) \geq 0 .
\end{eqnarray}
Due to the arbitrariness of $u (\cdot)$, we can conclude that $\bar u (\cdot)$ is an optimal control process
and thus $(\bar u (\cdot), \bar x (\cdot), \bar y(\cdot), \bar z_1(\cdot), z_2(\cdot))$ is an optimal pair. The proof is completed.
\end{proof}

\bibliographystyle{model1a-num-names}

\begin{thebibliography}{00}

\bibitem{Anto} Antonelli, F. (1993). Backward-forward stochastic differential equations.
     {\it The Annals of Applied Probability,} 777-793.


\bibitem{BOK} Baghery, F., \& ?ksendal, B. (2007). A maximum principle for stochastic control with partial information.
    {\it Stochastic Analysis and Applications,} 25(3), 705-717.


\bibitem {DuEp} Duffie, D., \& Epstein, L. G. (1992). Asset pricing with stochastic differential utility.
    {\it The Review of Financial Studies,} 5(3), 411-436.

\bibitem{ET1976} Ekeland, I., T\'{e}mam, R. (1976). Convex Analysis and Variational Problems. North-Holland, Amsterdam.



\bibitem{ElPe} El Karoui, N., Peng, S., \& Quenez, M. C. (1997). Backward stochastic differential equations in finance. {\it Mathematical finance,} 7(1), 1-71.


\bibitem{Xu95}
Hu, M. (2017). Stochastic global maximum principle for optimization with recursive utilities.
{\it Probability, Uncertainty and Quantitative Risk,} 2(1), 1.

\bibitem{Meng}Meng, Q. (2009). A maximum principle for optimal control problem of fully coupled forward-backward stochastic systems with partial information.
    {\it Science in China Series A: Mathematics,} 52(7), 1579-1588.


\bibitem{Mou}Mou, L., \& Yong, J. (2007). A variational formula for stochastic controls and some applications.
     {\it Pure and Applied Mathematics Quarterly,} 3(2), 539-567.



\bibitem{KoX}Kohlmann, M., \& Xiong, D. (2007). The mean-variance hedging of a defaultable option with partial information.
    {\it Stochastic analysis and applications,} 25(4), 869-893.


\bibitem{okb}{\O}ksendal, B. (2006). A universal optimal consumption rate for an insider.
     {\it Mathematical Finance,} 16(1), 119-129.



\bibitem{PaPe90} Pardoux, E., \& Peng, S. (1990). Adapted solution of a backward stochastic differential equation. {\it Systems \& Control Letters,} 14(1), 55-61.

   \bibitem{ShWz2006}
Shi, J., \& Wu, Z. (2006). The Maximum I Principle for Fully Coupled Forward-backward Stochastic Control System.
{\it Acta Automatica Sinica,} 32(2), 161.


\bibitem{WWX}Wang, G., Wu, Z., \& Xiong, J. (2013). Maximum principles for forward-backward stochastic control systems with correlated state and observation noises.
      {\it SIAM Journal on Control and Optimization,} 51(1), 491-524.


   \bibitem{Wu9}
Wu, Z. (2013). A general maximum principle for optimal control of forward-backward stochastic systems.
{\it Automatica,} 49(5), 1473-1480.


\bibitem{Yong10}
Yong, J. (2010). Optimality variational principle for controlled forward-backward stochastic differential equations with mixed initial-terminal conditions.
{\it SIAM  Journal on Control and Optimization,} 48(6), 4119-4156.
\end{thebibliography}

\end{document}